\documentclass[11pt,leqno,twoside]{article}

\usepackage[english]{babel}

\usepackage[a4paper,top=3cm,bottom=3cm,left=3cm,right=3cm,marginparwidth=1.75cm]{geometry}

\usepackage{amsmath,amsfonts,amsthm,amssymb}
\usepackage{graphicx}
\usepackage{enumerate}
\usepackage[colorlinks=true, allcolors=blue]{hyperref}
\usepackage{natbib}

\numberwithin{equation}{section}

\newtheorem{remark}{Remark}
\newtheorem{definition}{Definition}

\newtheorem{thrm}{Theorem}

\newtheorem*{assumptions}{Assumptions}

\DeclareMathOperator{\Mean}{\mathbf{E}}
\DeclareMathOperator{\Prb}{\mathbf{P}}

\DeclareMathOperator{\argmin}{argmin}

\newcounter{hypcount}
\newenvironment{hypenv}{\begin{enumerate}\setcounter{enumi}{\value{hypcount}}}{\setcounter{hypcount}{\value{enumi}}\end{enumerate}}

\newcounter{hypcount2}

\newcommand{\hypref}[1]{(A\ref{#1})}
\newcommand{\hyprefall}{(A1)\,--\,(A\arabic{hypcount})}

\newcounter{dummy}
\newcommand{\hyprefallbutlast}{\setcounter{dummy}{\value{hypcount}}\addtocounter{dummy}{-1}(A1)\,--\,(A\arabic{dummy})}

\newcommand{\cA}{\mathcal{A}}
\newcommand{\cB}{\mathcal{B}}

\newcommand{\cF}{\mathcal{F}}

\newcommand{\cP}{\mathcal{P}}

\newcommand{\cT}{\mathcal{T}}

\newcommand{\cX}{\mathcal{X}}

\newcommand{\bF}{\mathbb{F}}
\newcommand{\bG}{\mathbb{G}}
\newcommand{\bS}{\mathbb{S}}

\newcommand{\bN}{\mathbb{N}}
\newcommand{\bR}{\mathbb{R}}

\renewcommand{\phi}{\varphi}
\renewcommand{\epsilon}{\varepsilon}

\title{Mean field games with option to buy information\footnote{
The authors thank Jos{\'e} Antonio Salmer{\'o}n Garrido for many helpful discussions.
}
}

\author{Bernardo D'Auria\footnote{Department of Mathematics ``Tullio Levi - Civita'', University of Padua, Italy, \texttt{dauria@math.unipd.it}}
\and Markus Fischer\footnote{Department of Mathematics ``Tullio Levi - Civita'', University of Padua, Italy, \texttt{fischer@math.unipd.it}}}

\date{June 5, 2026}

\begin{document}
\maketitle

\begin{abstract}
	We introduce a class of continuous time finite horizon mean field games where the objective function of the representative player depends on a hidden state, in addition to position, control, and the population distribution. While acting on the position dynamics, the agent has the option to pay for seeing the hidden state. We connect the original formulation of our model with a mean field model of optimal control with discretionary stopping, characterize solutions, and give a simple explicitly solvable example. For a class of $N$-player games with compatible information structure, we show that approximate Nash equilibria can be constructed starting from a solution to the limit model.
\end{abstract}

\textbf{Keywords and phrases:} mean field games, hidden state, information acquisition, control-stopping problem.\medskip

{\small \textbf{2020 AMS subject classifications:} 60G40, 91A06, 91A16, 93E20}

\section{Introduction}

Classical mean field games as introduced in \citet{huangetal06, lasrylions07} arise as limiting systems for symmetric, weakly interacting, non-zero-sum, non-cooperative stochastic $N$-player games as the number of players N tends to infinity. In this framework, a representative agent responds to the aggregate behavior of the population according to a prescribed objective function. An essential feature is the choice of admissible strategies as it determines the information structure of the limit as well as prelimit games and the way they are connected; see \citet{lacker20} and the references therein. Different forms of partial information have recently been considered in the mean field game literature: trading with latent states and differing beliefs among subpopulations in \cite{casgrainjaimungal20}, partial observation of agent positions in a linear-quadratic setting in \cite{bensoussanyam21}, mean field games with unknown initial distribution in \citet{bertucci22+}, discrete time mean field games with controllable information delay in \citet{bechereretal23+}; a Stackelberg equilibrium between an informed major player and a minor player population in mean field game equilibrium in \cite{bergaultetal24}, an optimal stopping mean field game to estimate a hidden binary parameter in \cite{campbellyhang24+}, and a finite horizon mean field game with objective functional depending on a hidden Gaussian state with Bayesian learning from noisy signal in \citet{shmayayiliotto25}.

Here, we will study a simple continuous time finite horizon mean field game in which the objective function depends on a hidden state that is unknown at initial time. During the game, the representative agent has the option to pay a cost for observing the hidden state. This cost can be interpreted as a measurement fee paid to eliminate model uncertainty. The cost functional depends on position, control, population distribution, and the hidden state, whereas the position dynamics are given by an initial condition, a directly controlled drift, and additive Wiener noise. Control actions are required to be  non-anticipative. The representative player is allowed to buy access to the hidden state at a \emph{stopping time} with respect to the filtration generated by the initial position and the driving Wiener process. Once acquired, the information on the hidden state will be available for control. The possibility of acquiring information divides the agent population into two groups: uninformed and informed agents. Mathematically, this decision can be described as a form of discretionary stopping. Compared to mean field games with control-stopping recently studied in the literature, see \citet{dumitrescuetal21, dumitrescuetal23} and the earlier work \citet{carmonaetal17} on mean field games of optimal stopping only, the stopping time in our model plays a different role as it does not terminate the game. The study of optimal control-stopping problems, without game or mean field structure, has a long tradition; see \citet{mazliak93}, \citet{pham98}, \citet{cecibassan04} and the more recent works by \citet{elasrietal22} and \cite{deangelismilazzo23}.

The rest of this paper is structured as follows. In Section~\ref{SectMFGstrong}, we specify the model and introduce a notion of mean field game equilibrium, called (strong) solution. In Section~\ref{SectMixedCP}, we reformulate the optimality condition in the definition of solutions in terms of an optimal control-stopping problem. Based on this connection, we heuristically derive, in Section~\ref{SectMFGSystem}, a coupled system of backward Hamilton-Jacobi-Bellman equations and a Kolmogorov forward equation. In Section~\ref{SectExample}, we discuss a simple linear-quadratic example, which also serves to illustrate existence of solutions. Section~\ref{SectApproximation} introduces a class of $N$-player games with compatible information structure in the sense that solutions to our mean field game allow to construct approximate $N$-player Nash equilibria.

\section{The mean field game in the strong formulation} \label{SectMFGstrong}

For a Polish space $\cX$, let $\cP(\cX)$ denote the space of probability measures on the Borel sets of $\cX$. Endow $\cP(\cX)$ with the topology of weak convergence of measures. Let $\bS$ denote the space of values for the hidden state. For simplicity, we assume that $\bS$ is a finite set endowed with the discrete topology. Some of our results can be extended to the case when $\bS$ is a general Polish space. Let $\mu \in \cP(\bS)$. The measure $\mu$ will thus be the commonly known distribution of the hidden state. Fix $s_0 \in \bS$, and define multiplication of elements of $\bS$ with the real numbers $0$, $1$ by setting, for $s\in \bS$,  $s \cdot 1 \doteq s$, $s \cdot 0 \doteq s_{0}$.

The position of the representative player will evolve in $\bR^{d}$. Let $\nu \in \cP(\bR^d)$. The measure $\nu$ will be the distribution of the initial position (at time zero) of the representative player.

Let $(\Omega,\cF, \Prb)$ be a complete probability space with $\bF=\{\cF_t\}_{t\ge0}$ a complete filtration in $\cF$ carrying a $d$-dimensional $\bF$-Wiener process $W$ starting in zero, an $\cF_0$-measurable $\bS$-valued random variable $S$ with distribution $\mu$, and an $\cF_0$-measurable $\bR^{d}$-valued random variable $\xi$ with distribution $\nu$ such that
\[
	S, \xi, W \text{ are independent}.
\]
Clearly, this last requirement is stronger than the independence of $(S,\xi)$ and $W$ that comes from the $\cF_0$-measurability of the former.

Let $\Gamma \subset \bR^{d}$ be a closed set, the space of control actions. Let $\cA$ denote the set of all $\Gamma$-valued $\bF$-progressively measurable processes $\alpha$ such that $\alpha$ is square-integrable in the sense that
\[
	\Mean\left[ \int_{0}^{t} |\alpha(r)|^{2}dr \right] < \infty \quad\text{for all }t \geq 0.
\]
Let $T > 0$ be the finite time horizon and $\sigma \geq 0$ the noise intensity parameter. For $\alpha \in \cA$, let $X^{\xi,\alpha}$ denote the process
\begin{equation} \label{EqDynamicsXi}
	X^{\xi,\alpha}(t) \doteq \xi + \int_{0}^{t} \alpha(r)dr + \sigma\cdot W(t), \quad t\geq 0.
\end{equation}
Given a deterministic initial time $t_0 \in [0,T]$ and a deterministic position $x\in \bR^{d}$, let $X^{t_0,x,\alpha}$ denote the process
\begin{equation} \label{EqDynamics}
	X^{t_0,x,\alpha}(t) \doteq x + \int_{t_0}^{t} \alpha(r)dr + \sigma\cdot \left(W(t)-W(t_0) \right), \quad t\geq t_0,
\end{equation}
where we set $X^{t_0,x,\alpha}(t) \doteq x$ for $t\in [0,t_0)$. The processes $X^{\xi,\alpha}$, $X^{t_0,x,\alpha}$ will denote the controlled position process of the representative player when starting from $\xi$ at time zero and from $x$ at time $t_{0}$, respectively.

Let $\bF^{\xi,W}$ denote the filtration generated by $\xi$ and $W$:
\[
		\cF^{\xi,W}_t \doteq \boldsymbol{\sigma}\left( \xi, W(r) : r \in [0,t] \right), \quad t \geq 0.
\]
Let $\cA^{\xi}$ denote the set of all $\bF^{\xi,W}$-progressively measurable processes in $\mathcal{A}$, and let $\cT^{\xi}_{T}$ denote the set of all $[0,T]$-valued $\bF^{\xi,W}$-stopping times. For $\tau \in \cT^{\xi}_{T}$, let $\bF^{\xi,W,S(\tau)}$ denote the filtration given by
\[
\cF^{\xi,W,S(\tau)}_t \doteq 
	\boldsymbol{\sigma}\left(\xi, W(r), S\cdot \mathbf{1}_{[\tau,\infty)}(r) : r\in [0,t] \right), \quad t\geq 0,
\]
where, recalling the definition of $s_0$, we have for all $\omega \in \Omega$,
\[
S(\omega) \cdot \mathbf{1}_{[\tau(\omega),\infty)}(r) = \begin{cases}
	S(\omega) &\text{if } r \geq \tau(\omega), \\
	s_{0} &\text{if } r < \tau(\omega).
\end{cases}
\]
Denote by $\cA^{\xi}_{\tau}$ the set of all $\bF^{\xi,W,S(\tau)}$-progressively measurable processes in $\mathcal{A}$.

For $t_0 \in [0,T]$, let $\bF^{W,t_0}$ denote the filtration given by
\[
	\cF^{W,t_0}_t \doteq \begin{cases}
		\{\emptyset,\Omega\} &\text{if } t \in [0,t_0), \\
		\boldsymbol{\sigma}\left( W(r) - W(t_0) : r\in [t_0,t] \right) &\text{if } t \geq t_0. 
\end{cases}
\]
Let $\cT_{t_0,T}$ denote the set of all $[t_0,T]$-valued $\bF^{W,t_0}$-stopping times. For $\tau \in \cT_{t_0,T}$, let $\bF^{W,t_0,S(\tau)}$ denote the filtration given by
\[
	\cF^{W,t_0,S(\tau)}_t \doteq \begin{cases}
		\{\emptyset, \Omega\} &\text{if } t \in [0,t_0), \\
		\boldsymbol{\sigma}\left(W(r) - W(t_0), S\cdot \mathbf{1}_{[\tau,\infty)}(r) : r\in [t_0,t] \right) &\text{if } t \geq t_0.
	\end{cases}
\]
Let $\cA_{t_0}$ denote the set of all $\Gamma$-valued $\bF^{W,t_0}$-progressively measurable processes, and let $\cA_{t_0,\tau}$ denote the set of all $\Gamma$-valued $\bF^{W,t_0,S(\tau)}$-progressively measurable processes.

Notice that our control processes are all defined on the entire non-negative half-line, and they are all $\bF$-progressively measurable. By construction, we have for $0 \leq t_{0} \leq t_{1} \leq T$,
\begin{align*}
	& \cA_{t_1} \subset \cA_{t_0} \subset \cA^{\xi}, & 
	& \cT_{t_1,T} \subset \cT_{t_0,T} \subset \cT^{\xi}_{T}, &
\end{align*}
and, for $\tau \in \cT^{\xi}_{T}$,
\begin{align*}
	& \cA^{\xi} \subset \cA^{\xi}_{\tau} \subset \cA, &
	& \cA_{t_0} \subset \cA_{t_0,\tau} \subset \cA \text{ if } \tau \in \cT_{t_0,T}. &	
\end{align*}

In order to define the cost functional, choose $f\colon [0,T]\times \bR^{d} \times \bS \times \Gamma \times \cP(\bR^{d}\times\{0,1\}) \rightarrow \bR$, representing the running costs, a function $g\colon \bR^{d} \times \bS \times \cP(\bR^{d}) \rightarrow \bR$, representing the terminal costs, and a function $h\colon [0,T]\times \bR^{d} \rightarrow [0,\infty)$ as the cost of buying access to the hidden state $S$.

\begin{assumptions}
The functions $f$, $g$, $h$ are Borel measurable and such that:

\begin{hypenv}
	\item \label{ARunTermCosts} The functions $f(t,.,.,.,.)$, $g$ are continuous for all $t\in [0,T]$, and there exists a finite constant $K$ such that for all $x\in \bR^{d}$,
	\[
		\sup_{t\in [0,T], s \in \bS, a \in \Gamma, m \in \cP(\bR^{d}\times\{0,1\})} |f(t,x,s,a,m)|\vee |g(x,s,m)| \leq K\left(1 + |a|^{2} + |x|^{2} \right).
	\]
	
	\item \label{ACoercivity} Either $\Gamma$ is compact or there exist constants $c_{0}$, $C_{0} \in (0,\infty)$, $q\in [0,2)$ such that for all $a\in \Gamma$, all $x\in \bR^{d}$,
	\begin{align*}
		\inf_{t\in [0,T], s \in \bS, m \in \cP(\bR^{d}\times\{0,1\})} f(t,x,s,a,m) & \geq c_{0}|a|^{2} - C_{0}\left(1 + |x|^{q} \right), \\
		\inf_{s \in \bS, m \in \cP(\bR^{d}\times\{0,1\})} g(x,s,m) &\geq - C_{0}\left(1 + |x|^{q} \right).
	\end{align*}
	
	\item \label{AInfoCosts} The function $h$ is bounded non-negative, $h$ is continuous on $[0,T)\times \bR^{d}$ with finite limits from the left at $T$, 
	and $h(T,.) = 0$.

	\item \label{ALocLipMeasure} With $\mathrm{d}_{\cP}$ some bounded metric on $\cP(\bR^{d}\times\{0,1\})$ that is compatible with the topology of weak convergence of measures, there exists a finite constant $L$ such that for all $x\in \bR^{d}$, all $m, \tilde{m} \in \cP(\bR^{d}\times\{0,1\})$,
	\begin{multline*}
		\sup_{t\in [0,T], s \in \bS, a \in \Gamma} |f(t,x,s,a,m)-f(t,x,s,a,\tilde{m})|\vee |g(x,s,m)-g(x,s,\tilde{m})| \\
		\leq L\left(1 + |x| \right)\cdot \mathrm{d}_{\cP}(m,\tilde{m}).
	\end{multline*}
		
\end{hypenv}
\end{assumptions}

Above and in the sequel, we identify the terminal cost function $g$, which is defined on $\bR^{d} \times \bS \times \cP(\bR^{d})$, with its natural extension to a function on $\bR^{d} \times \bS \times \cP(\bR^{d}\times \{0,1\})$. The growth assumption \hypref{ARunTermCosts} in conjunction with the coercivity assumption \hypref{ACoercivity} and the boundedness of the information costs according to \hypref{AInfoCosts} will ensure that the expected costs as well as the minimal expected costs as given below will always be finite. Assumption \hypref{ALocLipMeasure} is a condition of Lipschitz continuity in the measure variable that holds locally in the position variable, uniformly in the other variables. A compatible bounded metric $\mathrm{d}_{\cP}$ is given, for example, by the bounded Lipschitz metric; cf.\ Section~11.3 in \citet[pp.\,393\,ff.]{dudley02}.

A mapping $\rho\colon \mathbb{S} \times [0,\! T] \rightarrow \mathcal{P}(\mathbb{R}^{d}\times \{0,\! 1\})$, written $(s,t)\mapsto \rho_{s}(t)$, is called a \emph{conditional flow of measures} if it is Borel measurable. For $(s,t)\in \bS \times [0,T]$, $\rho_{s}(t)$ will be the conditional distribution of positions and information state of the player population at time $t$ given that the hidden state $S$ equals $s$. In particular, for a Borel set $B \subset \bR^d$, $\rho_{s}(t;B\times\{0\})$ will be the mean field limit proportion of players who at time $t$ stay in $B$ and have not bought information on the hidden state $S$ given that $S = s$, whereas $\rho_{s}(t;B\times\{1\})$ will be the proportion of players who at time $t$ stay in $B$ and have bought information on $S$ given that $S = s$, hence know that $S = s$.

For $\tau\in \mathcal{T}^{\xi}_{T}$, $\alpha\in \mathcal{A}^{\xi}_{\tau}$, and $\rho$ a conditional flow of measures, define the expected costs under $(\tau,\alpha)$ starting from $\xi$ at time zero according to
\begin{equation*} 
	J_{\rho}(\tau,\alpha) \doteq \Mean\left[ \int_{0}^{T} \! f\left(t,X^{\xi,\alpha}(t),S,\alpha(t),\rho_{S}(t)\right)dt + h\bigl(\tau, X^{\xi,\alpha}(\tau)\bigr) + g\left(X^{\xi,\alpha}(T),S,\rho_{S}(T)\right) \right],
\end{equation*}
where $X^{\xi,\alpha}$ obeys Eq.~\eqref{EqDynamicsXi}. then we have the following definition of solutions to our mean field game:

\begin{definition} \label{DefMFGstrong}
	A conditional flow of measures $\rho$ is called a \emph{strong solution of the mean field game} if there exist $\tau\in \mathcal{T}^{\xi}_{T}$, $\alpha\in \mathcal{A}^{\xi}_{\tau}$ such that
	\begin{itemize}
		
	\item Optimality:
		\[
		J_{\rho}(\tau,\alpha) \leq \inf_{\tilde{\tau}\in \mathcal{T}^{\xi}_{T}} \inf_{\tilde{\alpha}\in \mathcal{A}^{\xi}_{\tilde{\tau}}} J_{\rho}(\tilde{\tau},\tilde{\alpha}).
		\]
		
	\item Consistency: for $\mu$-a.e.\ $s\in \mathbb{S}$, all $t\in [0,\! T]$,
		\[
		\rho_{s}(t) = \Prb\left( (X^{\alpha}(t), \mathbf{1}_{[\tau,\infty)}(t))\in . \mid S = s \right).
		\]
		
	\end{itemize}
	
\end{definition}

Given a conditional flow of measures $\rho$, the minimal costs when starting at time $t_0 \in [0,T]$ in position $x\in \bR^{d}$  \emph{not knowing} the value of $S$ are given by
\begin{equation} \label{ExVorg}
	\begin{split}
		V_{\rho}(t_{0}, x) \doteq \inf_{\tau\in \cT_{t_0,T}} \inf_{\alpha \in \cA_{t_0,\tau}} \Mean\Bigl[ &\int_{t_0}^{T}
		f\bigl(t, X^{t_{0},x,\alpha}(t), S, \alpha(t), \rho_{S}(t)\bigr) d t  \\
		&+ h\bigl(\tau, X^{t_{0},x,\alpha}(\tau)\bigr) + g\bigl(X^{t_{0},x,\alpha}(T), S, \rho_{S}(T)\bigr) \Bigr].
	\end{split}
\end{equation}
Notice that the admissible control processes in the definition of $V_{\rho}(t_0,.)$ above are in $\cA_{t_0,\tau}$ and thus depend on the stopping time $\tau\in \cT_{t_0,T}$. Also notice that, thanks to \hyprefallbutlast, $V_{\rho}(t_0,x)\in (-\infty,\infty)$ for all $(t_0,x) \in [0,T]\times \bR^{d}$.

\begin{remark}
The optimality condition in Definition~\ref{DefMFGstrong} can be rewritten using the value function $V_{\rho}$:
\[
	\inf_{\tilde{\tau}\in \mathcal{T}^{\xi}_{T}} \inf_{\tilde{\alpha}\in \mathcal{A}^{\xi}_{\tilde{\tau}}} J_{\rho}(\tilde{\tau},\tilde{\alpha}) = \int_{\bR^d} V_{\rho}(0, x) \nu(dx).
\]
To check this, one conditions the processes defining $J_{\rho}(\tilde{\tau},\tilde{\alpha})$ on the random variable $\xi$.
\end{remark}


\section{The associated optimal control-stopping problem} \label{SectMixedCP}

Let $\rho$ be again a conditional flow of measures. Given $\rho$ and a hidden state $s \in \bS$, the minimal costs when starting at time $t_0$ in position $x$ \emph{knowing that} $S=s$ are given by
\begin{equation} \label{ExVsknown}
	V_{s,\rho}(t_{0}, x) \doteq \inf _{\alpha \in \cA_{t_0}} \Mean\left[
\int_{t_{0}}^{T} f\bigl(t, X^{t_0,x,\alpha}(t), s, \alpha(t), \rho_{s}(t) \bigr) d t + g\bigl(X^{t_0,x,\alpha}(T), s, \rho_{s}(T)\bigr)\right]
\end{equation}
where $X^{t_0,x,\alpha}$ with $\alpha \in \cA_{t_0}$ is given by \eqref{EqDynamics}. The control processes are thus adapted to $\bF^{W,t_0}$, the filtration generated by the increments of the driving Wiener process. Notice that the definition of $ V_{s,\rho}$ does not include the cost of buying the information $S=s$.

\begin{remark}
Under our assumptions, $V_{s, \rho}$ will coincide with the unique (classical or at least viscosity) solution $v$ to the \mbox{HJB} equation
\begin{equation} \label{EqHJBVs}
\begin{cases}
v(T, x) =  g\bigl(x, s, \rho_{s}(T)\bigr), & x \in \bR^{d}, \\
-\frac{\partial}{\partial t} v(t, x) = H_{s,\rho}\bigl(t,x, \nabla_{x} v(t, x) \bigr) + \frac{\sigma^{2}}{2} \Delta_{x} v(t, x),
&(t, x) \in[0, T) \times \bR^{d},
\end{cases}
\end{equation}
where, with $\langle .\,,.\rangle$ denoting standard scalar product,
\[
    H_{s,\rho}(t,x,p) \doteq \inf_{a \in \Gamma}
    \left\{ f\left(t, x, s, a, \rho_{s}(t)\right) + \left\langle a,p \right\rangle \right\}.
\]
\end{remark}

Recall that $\mu$ is the law of $S$. Define $\tilde{f}_{\rho} \colon [0,T] \times \bR^{d} \times \Gamma \rightarrow \bR$, $\tilde{V}_{\rho} \colon [0,T] \times \bR^{d} \rightarrow \bR$, and $G_{\rho}\colon [0,T] \times \bR^{d} \rightarrow \bR$ according to
\begin{align*}
	& \tilde{f}_{\rho}(t, x, a) \doteq \int_{\bS} f\bigl(t, x, s, a, \rho_{s}(t)\bigr) \mu(ds),& & \tilde{V}_{\rho}(t,x) \doteq \int_{\bS} V_{s,\rho}(t, x)\, \mu(ds),& \\
	& G_{\rho}(t, x) \doteq h(t,x) + \tilde{V}_{\rho}(t,x). & &&
\end{align*}
Let $\hat{V}_{\rho} \colon [0,T]\times \bR^{d} \rightarrow \bR$ be given by
\begin{equation} \label{ExVhat}
	\hat{V}_{\rho}(t_{0}, x) \doteq \inf_{\tau \in \cT_{t_0,T}} \inf_{\alpha \in \cA_{t_0}} \Mean\left[ \int_{t_0}^{\tau} \tilde{f}_{\rho}\bigl(t, X^{t_0,x,\alpha}(t), \alpha(t)\bigr) d t + G_{\rho}\bigl(\tau, X^{t_0,x,\alpha}(\tau)\bigr) \right].
\end{equation}
Then $\hat{V}_{\rho}$ is the value function of an optimal control problem with discretionary stopping. Recall that the admissible stopping times for $\hat{V}_{\rho}(t_0,.)$ are $[t_0,T]$-valued. Also notice that both the admissible stopping times and the admissible control processes are adapted to $\bF^{W,t_0}$.

\begin{remark}
The value $\hat{V}_{\rho}$ should coincide with the unique viscosity solution $\hat{v}$ to
\begin{equation} \label{EqHJBVhat}
\begin{cases}
 \hat{v}(T, x) = G_{\rho}(T,x), & x \in \bR^{d}, \\
\min \Bigl\{ G_{\rho}(t,x) - \hat{v}(t,x), & \\
\qquad \frac{\partial}{\partial t} \hat{v}(t, x) + H_{\rho}\bigl(t, x, \nabla_{x} \hat{v}(t,x) \bigr) + \frac{\sigma^{2}}{2} \Delta_{x} \hat{v}(t, x) \Bigr\} = 0, & (t, x) \in[0, T) \times \bR^{d},
\end{cases}
\end{equation}
where
\[
    H_{\rho}(t,x,p) \doteq \inf_{a \in \Gamma}\left\{ \tilde{f}_{\rho}(t, x, a) + \left\langle a, p \right\rangle \right\},
\]
see Section~4 in \citet{elasrietal22} (there, the objective functional is maximized, not minimized as here).
\end{remark}

The original control problem with option to buy information and the associated optimal control problem with discretionary stopping are equivalent in the following sense:

\begin{thrm} \label{ThValueEquiv}
Grant \hyprefallbutlast. Let $\rho$ be a conditional flow of measures. Let $V_{\rho}$ be given by \eqref{ExVorg}, and let $\hat{V}_{\rho}$ be given by \eqref{ExVhat}. Then $V_{\rho} = \hat{V}_{\rho}$.
\end{thrm}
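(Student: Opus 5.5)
We prove the two inequalities $V_{\rho}\le\hat V_{\rho}$ and $V_{\rho}\ge\hat V_{\rho}$ separately, after fixing $(t_0,x)$. The unifying idea is to split the cost in \eqref{ExVorg} at the information time $\tau$. Before $\tau$, the control is $\bF^{W,t_0}$-adapted and independent of $S$, so $S$ can be integrated out against $\mu$; this turns $f$ into $\tilde f_{\rho}$. After $\tau$, the player knows $S=s$ and faces the problem \eqref{ExVsknown} from the random initial point $(\tau,X(\tau))$. Since $S$ takes finitely many values, $\bF^{W,t_0,S(\tau)}$-progressive processes have a simple structure. On $[t_0,\tau)$ they agree with an $\bF^{W,t_0}$-progressive process. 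On $[\tau,\infty)$ they are of the form $\alpha(t)=\sum_{s}\mathbf{1}_{\{S=s\}}\alpha_s(t)$ with each $\alpha_s$ being $\bF^{W,t_0}$-progressive. I will establish this representation first, as a measurability lemma (for instance by a monotone class argument on the generators $S\cdot\mathbf{1}_{[\tau,\infty)}(r)$).

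\emph{Lower bound $V_\rho\ge\hat V_\rho$.} Take admissible $(\tau,\alpha)$ for $V_\rho$ and write $\alpha$ as above. By independence of $S$ and $W$, and since $\tau$ and $\alpha|_{[t_0,\tau)}$ do not depend on $S$, the pre-$\tau$ running cost equals $\Mean\int_{t_0}^{\tau}\tilde f_\rho(t,X(t),\alpha(t))dt$. Conditioning the post-$\tau$ part on $S=s$ gives
\[
\Mean\Bigl[\int_\tau^T f(t,X^{\alpha_s}(t),s,\alpha_s(t),\rho_s(t))dt+g(X^{\alpha_s}(T),s,\rho_s(T))\Bigr].
\]
The key inequality is that this conditional cost is at least $\Mean[V_{s,\rho}(\tau,X(\tau))]$. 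This is the dynamic programming (super-martingale) property of the value of the known-state problem at a stopping time, and it follows by conditioning on $\cF^{W,t_0}_\tau$ and using the strong Markov property of Brownian increments. Summing over $s$ with weights $\mu$ and adding the cost $h$ yields a cost of at least $\Mean[\int_{t_0}^\tau\tilde f_\rho\,dt+G_\rho(\tau,X(\tau))]\ge\hat V_\rho(t_0,x)$, where $\alpha$ restricted to $[t_0,\tau)$ is extended to an element of $\cA_{t_0}$ (the extension does not matter).

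\emph{Upper bound $V_\rho\le\hat V_\rho$.} Fix $\epsilon>0$ and take $(\tau,\alpha)$ that is $\epsilon$-optimal for $\hat V_\rho$. After $\tau$ we need, for each $s$, an $\epsilon$-optimal control for $V_{s,\rho}(\tau,X(\tau))$ that is chosen measurably in the random starting point. This is the main obstacle, since it requires a measurable selection or pasting argument. The plan is as follows. First, establish continuity of $V_{s,\rho}$ in $(t,x)$, or at least enough regularity locally uniformly: Lipschitz in $x$ on compacts through the additive dynamics, using the growth bounds \hypref{ARunTermCosts} and \hypref{ACoercivity} to restrict to controls with bounded $L^2$ norm. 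Next, discretize $\tau$ and $X(\tau)$ onto a countable grid $\{(t_i,x_j)\}$ and use $\epsilon$-optimal controls $\alpha^{s}_{ij}\in\cA_{t_i}$, shifted so that they act on increments after $\tau$. Finally, paste these controls on the events $\{(\tau,X(\tau))\in\text{cell}_{ij}\}$, approximating $\tau$ from above by a discrete stopping time and controlling the error through continuity together with uniform integrability coming from the moment bounds. Alternatively, one can invoke a general dynamic programming principle at stopping times for the known-state problem (the value process $V_{s,\rho}(t,X(t))+\int_{t_0}^t f$ is a submartingale, and it is a martingale up to $\epsilon$ along near-optimal controls). The pasted control $\mathbf{1}_{[t_0,\tau)}\alpha+\mathbf{1}_{[\tau,\infty)}\sum_s\mathbf{1}_{\{S=s\}}\alpha_s$ is $\bF^{W,t_0,S(\tau)}$-progressive and square integrable. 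By the same computation as in the lower bound, its cost is at most $\hat V_\rho(t_0,x)+C\epsilon$. Letting $\epsilon\to0$ gives the claim, and since \hypref{ALocLipMeasure} is not used, only \hyprefallbutlast\ are needed.
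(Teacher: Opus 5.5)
Your lower bound $V_{\rho}\geq\hat V_{\rho}$ is correct, and it takes a different route from the paper's. The paper conditions only on $(S,\tau,X(\tau))$. Under that regular conditional distribution the post-$\tau$ control is no longer a functional of the driving noise, which is why the paper has to pass through relaxed controls and the weak/strong equivalence of Lacker (2017). You instead use $|\bS|<\infty$ to reduce to $\bF^{W,t_0}$-progressive pieces; this is valid, since for $t\geq t_0$, $\cF^{W,t_0,S(\tau)}_t=\cF^{W,t_0}_t\vee\boldsymbol{\sigma}(\{S=s,\tau\leq t\}: s\in\bS)$. You then condition on the whole pre-$\tau$ path, so the frozen continuation is a progressive functional of $W(\tau+\cdot)-W(\tau)$, hence in law an element of $\cA_{\mathfrak{t}}$. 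This stays in the strong formulation and needs no $\epsilon$. The price is a pseudo-Markov (Galmarino-type) measurability argument at the stopping time, and an essential use of the finiteness of $\bS$, which the paper's conditioning argument does not need.

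The gap is in the upper bound. Its architecture matches the paper's: countable pasting of $\epsilon$-optimal controls for $V_{s,\rho}$, with your approximation of $\tau$ from above playing the role of the paper's grid points with later time component. However, the regularity you plan to feed in is neither available nor sufficient.

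\emph{Not available.} Since $f$ and $g$ are only continuous in $x$, $V_{s,\rho}$ need not be locally Lipschitz. For instance, $d=1$, $\Gamma=[-1,1]$, $f\equiv0$, $g=\sqrt{|x|}$, $\sigma=0$ gives $V_{s,\rho}(t,x)=\sqrt{(|x|-(T-t))^{+}}$. Also, an $L^{2}$ bound on controls gives no uniform integrability of $|\alpha|^{2}$, hence no modulus for the costs that is uniform over controls.

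\emph{Not sufficient.} Continuity of $V_{s,\rho}$ alone does not bound the pasting error: the specific control $\alpha^{s}_{ij}$ chosen at $(t_i,x_j)$ must remain $\epsilon$-optimal from every point of its cell. This requires three things. First, continuity of $y\mapsto J_{s}(t_i,y;\alpha^{s}_{ij})$ for that fixed control, with $J_s$ the cost in \eqref{ExVsknown}; this does follow from the additive dynamics and dominated convergence. Second, lower semicontinuity of $V_{s,\rho}(t_i,\cdot)$, which does not follow from an $L^2$ bound; a compactness argument, e.g.\ via relaxed controls and the coercivity in (A2), gives it. Third, cells chosen \emph{after} the controls, with control-dependent radii and a countable subcover. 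Passing from $\tau_n$ back to $\tau$ further needs upper semicontinuity of $V_{s,\rho}$ from the right in time. Since $f$ and $\rho_{s}$ are only measurable in $t$, that takes a separate time-shift argument.

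Your fallback to ``a general DPP'' does not fix this as stated. The $\epsilon$-martingale property along controls near-optimal from a fixed initial point is beside the point here, because the pre-$\tau$ control comes from the $\hat V_{\rho}$-problem. What you need is the concatenation form: an $\epsilon$-optimal continuation from $(\tau,X(\tau))$ after an arbitrary given control. The paper handles this step by asserting that $\epsilon/2$-optimality at a grid point transfers to its nearest-neighbour cell ``thanks to (A1) and (A2)''. So this is the point that needs a real argument in either version.
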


\begin{proof}
Let $x \in \bR^{d}$. By construction, since $\cT_{T,T}$ only contains the constant stopping time equal to $T$ and $h(T,.) = 0$ by assumption, we have
\begin{multline*}
	V_{\rho}(T,x) = \Mean\left[ g\bigl(X^{T,x,\alpha}(T), S, \rho_{S}(T)\bigr) \right] = \Mean\left[ g\bigl(x, S, \rho_{S}(T)\bigr) \right] \\
	= \int_{\bS} g\bigl(x, s, \rho_{s}(T)\bigr) \mu(d s) = \int_{\bS} V_{s,\rho}(T,x) \mu(d s) = G_{\rho}(T,x) = \hat{V}_{\rho}(T,x).
\end{multline*}

Now, let $t_0 \in [0,T)$, and let $\epsilon > 0$ be arbitrary. We are going to show first that $\epsilon + V_{\rho}(t_0,x) \geq \hat{V}_{\rho}(t_0,x)$, then that $2\epsilon + \hat{V}_{\rho}(t_0,x) \geq V_{\rho}(t_0,x)$. This will imply the assertion.

\paragraph{First step: $\epsilon + V_{\rho}(t_0,x) \geq \hat{V}_{\rho}(t_0,x)$.} Let $(\tau,\alpha) \in \cT_{t_0,T}\times \cA_{t_0,\tau}$ be $\epsilon$-optimal for the original control problem starting in $(t_0,x)$, that is, $(\tau,\alpha)$ is such that
\[
	\epsilon + V_{\rho}(t_{0}, x) \geq \Mean\left[ \int_{t_0}^{T}
		f\bigl(t, X(t), S, \alpha(t), \rho_{S}(t)\bigr) d t  + h\bigl(\tau, X(\tau)\bigr) + g\bigl(X(T), S, \rho_{S}(T)\bigr) \right],
\]
where $X \doteq X^{t_{0},x,\alpha}$.

Fix $\gamma_0 \in \Gamma$, and set for $t\geq 0$, $\omega\in \Omega$,
\[
	\hat{\alpha}(t,\omega) \doteq \begin{cases}
		\alpha(t,\omega) &\text{if } t < \tau(\omega), \\
		\gamma_0 &\text{if } t \geq \tau(\omega).
	\end{cases}
\]
Then $\hat{\alpha}$ is $\bF^{W,t_0}$-progressively measurable, hence in $\cA_{t_0}$. To check this, let $t \geq 0$. Recall that $\tau$ takes values in $[t_0,T]$. If $t\in [0,t_0)$, then $\hat{\alpha}(t) = \alpha(t)$ and $\alpha(t)$ is deterministic. Now, suppose $t \geq t_0$. Since $\alpha(t)$ is $\cF^{W,t_0,S(\tau)}_t$-measurable and $\Gamma$ is a Borel space, by Doob's functional representation \citep[for instance, Lemma~1.14 in][p.\,18]{kallenberg21}, there exists a function $\psi\colon (\bR^{d} \times \bS)^{[t_0,t]} \rightarrow \Gamma$ measurable with respect to the infinite product $\sigma$-algebra of the Borel $\sigma$-algebra in $\bR^{d} \times \bS$ and to the Borel $\sigma$-algebra in $\Gamma$ such that for all $\omega\in \Omega$,
\[
	\alpha(t,\omega) = \psi\left( \left(W(r,\omega)-W(t_0,\omega), S(\omega)\cdot \mathbf{1}_{[\tau(\omega),\infty)}(r) \right)_{r\in [t_0,t]} \right),
\]
hence
\[
	\hat{\alpha}(t,\omega) = \begin{cases}
		\psi\left( \bigl(W(r,\omega)-W(t_0,\omega), s_0 \bigr)_{r\in [t_0,t]} \right) &\text{if } t\in [t_0,\tau(\omega)), \\
		\gamma_{0} &\text{if } t\geq \tau(\omega).
	\end{cases}
\]
But $\tau$ is an $\bF^{W,t_0}$-stopping time, which implies that the event $\{t < \tau \}$ is in $\cF^{W,t_0}_t$. It follows that $\hat{\alpha}(t)$ is $\cF^{W,t_0}_t$-measurable.

Denote by $\hat{X}$ the process $X^{t_{0},x,\hat{\alpha}}$. By construction of $\hat{\alpha}$, Eq.~\eqref{EqDynamics}, we have for all $\omega\in \Omega$,
\begin{equation} \label{EqValueEquivXhat}
	X(t,\omega) = \hat{X}(t,\omega) \text{ whenever } t\in [t_{0}, \tau(\omega)).
\end{equation}
By continuity of trajectories, \eqref{EqValueEquivXhat} implies that
\[
	X(\tau) = \hat{X}(\tau)\quad \Prb\text{-almost surely}.
\]
The fact that $\hat{\alpha}$ is $\bF^{W,t_0}$-progressively measurable entails that $\hat{X}$ is $\bF^{W,t_0}$-progressively measurable, too. Since $S$ and $W$ are independent, we see that $S$ and $(\tau,\hat{X},\hat{\alpha})$ are independent.
Therefore, by \eqref{EqValueEquivXhat}, independence, and Fubini's theorem,
\begin{align*}
	& \Mean\left[ \int_{t_0}^{\tau} f\bigl(t, X(t), S, \alpha(t), \rho_{S}(t)\bigr) d t  + h\bigl(\tau, X(\tau)\bigr) \right] \\
	&= \Mean\left[ \int_{t_0}^{T} \mathbf{1}_{[0,\tau)}(t)\cdot f\bigl(t, \hat{X}(t), S, \hat{\alpha}(t), \rho_{S}(t)\bigr) d t  + h\bigl(\tau, X(\tau)\bigr) \right] \\
	&= \Mean\left[ \int_{\bS} \int_{t_0}^{T} \mathbf{1}_{[0,\tau)}(t)\cdot f\bigl(t, \hat{X}(t), s, \hat{\alpha}(t), \rho_{s}(t)\bigr) d t\, \mu(ds)  + h\bigl(\tau, \hat{X}(\tau)\bigr) \right] \\
	&= \Mean\left[ \int_{t_0}^{\tau} \tilde{f}_{\rho} \bigl(t, \hat{X}(t), s, \hat{\alpha}(t), \rho_{s}(t)\bigr) \mu(ds)\, d t  + h\bigl(\tau, \hat{X}(\tau)\bigr) \right].
\end{align*}

It thus remains to show:
\begin{equation} \label{EqValueEquivStepOne}
	\Mean\left[ \int_{\tau}^{T} f\bigl(t, X(t), S, \alpha(t), \rho_{S}(t)\bigr) d t  + g\bigl(X(T), S, \rho_{S}(T)\bigr) \right] \geq \Mean\left[ \tilde{V}_{\rho}\bigl(\tau, X(\tau)\bigr) \right].
\end{equation}
Since $S$ and $(\tau, \hat{X}(\tau))$ are independent under $\Prb$, we also have that $S$ and $(\tau,X(\tau))$ are independent. Using Fubini's theorem, it follows that
\[
	\Mean\left[ \tilde{V}_{\rho}\bigl(\tau, X(\tau)\bigr) \right] = \int_{\bS} \Mean\left[ V_{s,\rho}\bigl(\tau, X(\tau)\bigr) \right] \mu(ds) = \int_{\bS\times [t_0,T]\times \bR^{d}} V_{s,\rho}(\mathfrak{t},y)\, Q(ds,d\mathfrak{t},dy),
\]
where $Q \doteq \Prb\circ (S,\tau,X(\tau))^{-1}$ denotes the joint distribution of $S$, $\tau$, $X(\tau)$ and, by the above independence, $Q = \mu\otimes Q_{2}$ with $Q_{2} \doteq \Prb\circ (\tau,X(\tau))^{-1}$.

Let $\kappa$ denote (a version of) the regular conditional distribution under $\Prb$ of $(\alpha,X,W)$ given $(S,\tau,X(\tau))$. Thus, for $(s,\mathfrak{t},y) \in \bS\times [t_0,T]\times \bR^{d}$, $\kappa_{(s,\mathfrak{t}, y)}$ denotes the conditional distribution of $(\alpha,X,W)$ given that $S = s$, $\tau = \mathfrak{t}, X(\tau) = y$. 
Here, ordinary control processes are interpreted as relaxed controls (for instance, Section~2.1 in \citet{lacker17}) so that we can see them as random elements in a Polish space. Existence of a regular conditional distribution $\kappa$ is then guaranteed, and $\kappa$ is uniquely determined up to sets of $Q$-measure zero; cf.\ Theorem~8.5 in \citet[p.\,168]{kallenberg21}.
 
Recall that $W$ is an $\bF$-Wiener process, $S$ is $\cF_{0}$-measurable, that $\bF^{W,t_0} \subseteq \bF^{W,t_0,S(\tau)} \subseteq \bF$, that $\tau$ is an $\bF^{W,t_0}$-stopping time, and that $\alpha$ is $\bF^{W,t_0,S(\tau)}$-progressively measurable. This implies that the process $(W(\tau+r)-W(\tau))_{r\geq0}$ is a Wiener process with respect to the filtration $(\cF_{\tau+r})_{r\geq 0}$. Moreover, $(S,\tau,X(\tau))$ and $(W(\tau+r)-W(\tau))_{r\geq0}$ are independent under $\Prb$. In view of \eqref{EqDynamics}, we have
\[
	X(t) = X(\tau) + \int_{\tau}^{t} \alpha(r)dr + \sigma\cdot \left( W(t)-W(\tau) \right) \text{ whenever } t\geq \tau.
\]
Notice that the process $(\alpha(\tau + r))_{r\geq 0}$ is $(\cF_{\tau+r})_{r\geq 0}$-progressively measurable. It follows from the above that for $Q$-almost every $(s,\mathfrak{t},y) \in \bS\times [t_0,T]\times \bR^{d}$, we have with $\kappa_{(s,\mathfrak{t}, y)}$-probability one:
\begin{equation} \label{EqValueEquivCondDyn}
	X(t) = y + \int_{\mathfrak{t}}^{t} \alpha(r)dr + \sigma\cdot \left( W(t)-W(\mathfrak{t}) \right) \text{ for all } t\geq \mathfrak{t},
\end{equation}
where $(W(\mathfrak{t}+r)-W(\mathfrak{t}))_{r\geq 0}$ is still a Wiener process and, for every $t \geq \mathfrak{t}$, $(X(r), \alpha(r))_{r \in [\mathfrak{t},t]}$ and $(W(r)- W(t))_{r \geq t}$ are independent. Strictly speaking, we should see the processes $X$, $\alpha$, $W$ under $\kappa_{(s,\mathfrak{t}, y)}$ as projections on a suitable canonical space. With slight abuse of notation, write $\Mean_{(s,\mathfrak{t}, y)}$ for the expectation with respect to $\kappa_{(s,\mathfrak{t}, y)}$. Then
\begin{multline*}
	\Mean\left[ \int_{\tau}^{T} f\bigl(t, X(t), S, \alpha(t), \rho_{S}(t)\bigr) d t  + g\bigl(X(T), S, \rho_{S}(T)\bigr) \right] \\
	= \int_{\bS\times [t_0,T]\times \bR^{d}} \Mean_{(s,\mathfrak{t}, y)} \left[ \int_{\mathfrak{t}}^{T} f\bigl(t, X(t), s, \alpha(t), \rho_{s}(t)\bigr) d t  + g\bigl(X(T), S, \rho_{s}(T)\bigr) \right] Q(ds, d\mathfrak{t}, y).
\end{multline*}
Recall that, under $\kappa_{(s,\mathfrak{t}, y)}$, $X$ solves Eq.~\eqref{EqValueEquivCondDyn} and the control process $\alpha$ is non-anticipative with respect to the driving Wiener process. But $\alpha$ is not necessarily adapted to the filtration generated by the driving Wiener process, as instead required by the definition of $V_{s,\rho}$ in \eqref{ExVsknown}, where the value function $V_{s,\rho}$ is given in the so-called strong formulation. Yet, the value function would not change if $V_{s,\rho}$ had been defined in the weak formulation, that is, if the infimum in \eqref{ExVsknown} had been taken over all probability spaces carrying a filtration $\bG$, a $d$-dimensional Wiener process with respect to $\bG$, and a $\bG$-progressively measurable $\Gamma$-valued control process. This equivalence of weak and strong formulation for $V_{s,\rho}$ follows, for instance, from Theorem~2.4 in \citet{lacker17}, where finite horizon stochastic optimal control problems of McKean-Vlasov type are treated. Consequently, we find that for $Q$-almost every $(s,\mathfrak{t},y) \in \bS\times [t_0,T]\times \bR^{d}$,
\[
	\Mean_{(s,\mathfrak{t}, y)} \left[ \int_{\mathfrak{t}}^{T} f\bigl(t, X(t), s, \alpha(t), \rho_{s}(t)\bigr) d t  + g\bigl(X(T), S, \rho_{s}(T)\bigr) \right] \geq V_{s,\rho}(\mathfrak{t},y),
\]
hence
\[
	\Mean\left[ \int_{\tau}^{T} f\bigl(t, X(t), S, \alpha(t), \rho_{S}(t)\bigr) d t  + g\bigl(X(T), S, \rho_{S}(T)\bigr) \right] \\ \geq \int_{\bS\times [t_0,T]\times \bR^{d}} V_{s,\rho}(\mathfrak{t},y)\, Q(ds, d\mathfrak{t}, dy),
\]
which establishes \eqref{EqValueEquivStepOne}.

\paragraph{Second step: $2\epsilon + \hat{V}_{\rho}(t_0,x) \geq V_{\rho}(t_0,x)$.} Let $(\tau,\tilde{\alpha}) \in \cT_{t_0,T}\times \cA_{t_0}$ be $\epsilon$-optimal for the associated control problem starting in $(t_0,x)$, that is, $(\tau,\tilde{\alpha})$ is such that
\[
	\epsilon + \hat{V}_{\rho}(t_{0}, x) \geq \Mean\left[ \int_{t_0}^{\tau} \tilde{f}_{\rho}\bigl(t, \tilde{X}(t), \tilde{\alpha}(t)\bigr) d t  + h\bigl(\tau, \tilde{X}(\tau)\bigr) + \tilde{V}_{\rho}\bigl(\tau, \tilde{X}(\tau)\bigr) \right],
\]
where $\tilde{X} \doteq X^{t_{0},x,\tilde{\alpha}}$. Notice that $\tilde{\alpha}$ is $\bF^{W,t_0}$-progressively measurable. Hence the same is true for $\tilde{X}$, and since $\tau$ is an $\bF^{W,t_0}$-stopping time, we find that $S$ and $(\tau,\tilde{X},\tilde{\alpha})$ are independent. It follows that
\begin{align*}
	\Mean\left[ \int_{t_0}^{\tau} \tilde{f}_{\rho}\bigl(t, \tilde{X}(t), \tilde{\alpha}(t)\bigr) d t \right] &= \Mean\left[ \int_{t_0}^{\tau} f\bigl(t, \tilde{X}(t), S, \tilde{\alpha}(t), \rho_{S}(t)\bigr) d t \right].
\end{align*}

It remains to show that there exists $\alpha \in \cA_{t_0,\tau}$ such that
\begin{subequations}
\begin{align}
\label{EqValueEquivAlphaTau}
	&\alpha(t,\omega) \cdot \mathbf{1}_{[0,\tau(\omega))}(t) = \tilde{\alpha}(t,\omega) \cdot \mathbf{1}_{[0,\tau(\omega))}(t) \text{ for } \lambda_1\otimes \Prb\text{-a.a.\ } (t,\omega) \in [0,\infty)\times \Omega, \\
\intertext{and}
\label{EqValueEquivAlphaOpt}
	&\epsilon + \Mean\left[ \tilde{V}_{\rho}\bigl(\tau, \tilde{X}(\tau)\bigr) \right] \geq \Mean\left[ \int_{\tau}^{T} f\bigl(t, X(t), S, \alpha(t), \rho_{S}(t)\bigr) d t  + g\bigl(X(T), S, \rho_{S}(T)\bigr) \right],
\end{align}
\end{subequations}
where $X \doteq X^{t_{0},x,\alpha}$.

By independence of $S$ and $(\tau, \tilde{X}(\tau))$, we have
\[
	\Mean\left[ \tilde{V}_{\rho}\bigl(\tau, \tilde{X}(\tau)\bigr) \right] = \Mean\left[ V_{S,\rho}\bigl(\tau, \tilde{X}(\tau)\bigr) \right].
\]
If $\alpha \in \cA_{t_0,\tau}$ is such that $\eqref{EqValueEquivAlphaTau}$ holds, then by continuity of trajectories,
\[
	X(\tau) = \tilde{X}(\tau)	\quad \Prb\text{-almost surely}.
\]
In order to construct $\alpha \in \cA_{t_0,\tau}$ as desired, it is enough to verify that there exists a function $\psi\colon \bS\times [t_0,T]\times \bR^{d} \rightarrow \mathcal{A}$ such that, writing $\psi$ as the mapping $(s,\mathfrak{t},y) \mapsto \psi_{(s,\mathfrak{t},y)}$, we have:
\begin{enumerate}[(i)]
	\item for every $(s,\mathfrak{t},y) \in \bS\times [t_0,T]\times \bR^{d}$, the process $\psi_{(s,\mathfrak{t},y)}$ is $\bF^{W,\mathfrak{t}}$-progressively measurable, hence in $\mathcal{A}_{\mathfrak{t}}$, and it is $\epsilon$-optimal for $V_{s,\rho}(\mathfrak{t},y)$, that is,
	\[
		\epsilon + V_{s,\rho}(\mathfrak{t},y) \geq \Mean\left[
		\int_{\mathfrak{t}}^{T} f\bigl(t, X^{\mathfrak{t},y,\psi_{(s,\mathfrak{t},y)}}(t), s, \alpha(t), \rho_{s}(t) \bigr) d t + g\bigl(X^{\mathfrak{t},y,\psi_{(s,\mathfrak{t},y)}}(T), s, \rho_{s}(T)\bigr)\right],
	\]
		where $X^{\mathfrak{t},y,\psi_{(s,\mathfrak{t},y)}}$ is given by \eqref{EqDynamics};
		
	\item for every $t \geq 0$, the mapping
	\[
		\bS\times [t_0,T]\times \bR^{d} \times [0,t]\times \Omega \ni (s,\mathfrak{t},y,r,\omega) \mapsto \psi_{(s,\mathfrak{t},y)}(r,\omega) \in \Gamma
	\]
	is $\cB(\bS)\otimes \cB([t_0,T]) \otimes \cB(\bR^{d}) \otimes \cB([0,t]) \otimes \cF^{W,t_{0}}$\,/\,$\cB(\Gamma)$-measurable.
\end{enumerate}
Suppose we have such a function $\psi$. Then we can define $\alpha \in \cA_{t_0,\tau}$ by
\[
	\alpha(t,\omega) \doteq \begin{cases}
	\tilde{\alpha}(t,\omega) &\text{if } t < \tau(\omega), \\
	\psi_{(S(\omega),\tau(\omega),\tilde{X}(\tau(\omega),\omega))}(t,\omega) &\text{if } t \geq \tau(\omega),
	\end{cases}
	\qquad (t,\omega)\in [0,\infty)\times \Omega.
\]
This process $\alpha$, by construction and the properties of $\psi$, will be $\bF^{W,t_0,S(\tau)}$-progressively measurable and will satisfy $\eqref{EqValueEquivAlphaTau}$ as well as the $\epsilon$-optimality condition $\eqref{EqValueEquivAlphaOpt}$.

Existence of $\psi$ with the required measurability conditions can be obtained as follows (recall that $\bS$ is finite): For each $s\in \bS$, choose a countable subset $D_{s} \subset [t_0,T]\times \bR^d$ of isolated points such that if $(\mathfrak{t},y)\in D_{s}$ and $\alpha \in \cA_{\mathfrak{t}}$ is $\epsilon/2$-optimal for $V_{s,\rho}(\mathfrak{t},y)$, then $\alpha$ is $\epsilon$-optimal for $V_{s,\rho}(\tilde{t},\tilde{y})$ whenever $(\tilde{t},\tilde{y})\in [t_0,\mathfrak{t}]\times \bR^d$ and $(\mathfrak{t},y)$ is a nearest neighbor of $(\tilde{t},\tilde{y})$ with respect to $D_{s}$, where only grid points with time component equal to or greater than $\tilde{t}$ are considered. Such a choice is possible thanks to assumptions \hypref{ARunTermCosts} and \hypref{ACoercivity} on $f$ and $g$. Now, for each $s\in \bS$, each $(\mathfrak{t},y)\in D_{s}$ choose $\alpha_{s,\mathfrak{t},y}$  arbitrarily among all strategies in $\cA_{\mathfrak{t}}$ that are $\epsilon/2$-optimal optimal for $V_{s,\rho}(\mathfrak{t},y)$, and set $\psi_{(s,\mathfrak{t},y)} \doteq \alpha_{s,\mathfrak{t},y}$. In order to extend $\psi$ to the entire domain $\bS\times [t_0,T]\times \bR^{d}$, fix any deterministic procedure that produces a measurable nearest neighbor partition of $[t_0,T]\times \bR^{d}$ given a countable set of isolated points $D_{s}$, respecting the constraint on the time component. For $(s,\tilde{t},\tilde{y}) \in \bS\times [t_0,T]\times \bR^{d}$, define $\psi_{(s,\tilde{t},\tilde{y})} \doteq \alpha_{s,\mathfrak{t},y}$ where $(\mathfrak{t},y) \in D_s$ is the nearest neighbor (according to the fixed procedure) of $(\tilde{t},\tilde{y})$. Then $\psi$ enjoys the required properties.
\end{proof}

\section{Derivation of the mean field game system} \label{SectMFGSystem}

In this section, we derive a coupled system of Hamilton-Jacobi-Bellman equations and a Kolmogorov forward equation that, in principle, can be used to characterize solutions of our mean field game. Recall that $\bS$ is a finite set. For simplicity, suppose that $\mu(\{s\}) > 0$ for all $s\in \bS$, that $\sigma > 0$, and also that $\Gamma$ is compact.

Let $\nu \in \cP(\bR^{d})$ be a probability distribution with finite exponential moments, that is, $\int \exp(c|x|) \nu(dx) < \infty$ for all $c > 0$. Suppose that our probability space $(\Omega,\cF, \Prb)$ with filtration $\bF$ carries, in addition to the $\bF$-Wiener process $W$ and the $\cF_0$-measurable random variable $S$, an $\bR^d$-valued $\cF_0$-measurable random variable $\xi$ such that $\Prb\circ \xi^{-1} = \nu$ and such that $\xi$, $S$, $W$ are independent.

Suppose that the position dynamics are governed by Markov feedback strategies in the following way: Let $\hat{u}\colon [0,T]\times \bR^{d} \rightarrow \Gamma$ and $u\colon \bS\times [0,T]\times \bR^{d} \rightarrow \Gamma$ be measurable functions. Write the mapping $u$ as $(s,t,x)  \mapsto u_{s}(t,x)$. We set $\hat{u}(t,x) \doteq 0$, $u_{s}(t,x) \doteq 0$ whenever $t > T$.

Let $\hat{X}$ be the (path-wise) unique solution (under $\Prb$) to
\begin{equation} \label{EqMarkovSDEhat}
	\hat{X}(t) = \xi + \int_{0}^{t} \hat{u}\bigl(r,\hat{X}(r)\bigr)dr + \sigma W(t),\quad t\geq 0.
\end{equation}
Since $\Gamma$ is bounded and $\sigma > 0$ by assumption, we have that Eq.~\eqref{EqMarkovSDEhat} possesses a unique strong solution; see Theorem~2.8 in \citet{gyongykrylov96}. In particular, $\hat{X}$ is adapted to the filtration generated by $\xi$ and $W(.)$.

Let $\Psi\colon [0,T]\times \bR^{d} \rightarrow \mathbb{R}$ be a function of at most polynomial growth (in the space variable, uniformly in time), continuous on $[0,T)\times \bR^{d}$, with $\Psi(T,.) = 0$. Set
\[
	D\doteq \left\{ (t,x) \in [0,T]\times \bR^{d} : \Psi(t,x) > 0 \right\}.
\]
By continuity of $\Psi$, the set $D$ is open in $[0,T]\times \bR^{d}$. Assume that $\Psi$ is in $\mathbf{C}^{1,2}(D)$ with continuous partial derivatives on $D$ that are of at most polynomial growth. In addition, suppose that $\Psi$ is such that $D$ has a Lipschitz boundary. Set
\begin{align*}
	& \tau(\omega) \doteq \inf\left\{ t\in [0,T] : (t,\hat{X}(t,\omega)) \notin D \right\}, & \bar{\tau}(\omega) \doteq \tau(\omega) \wedge T,& &\omega\in \Omega.&
\end{align*}
Notice that $\tau$, $\bar{\tau}$ take values in $[0,T] \cup \{\infty\}$ and $[0,T]$, respectively, and that they are stopping times with respect to both $\bF$ and the filtration generated by $\xi$ and $W(.)$. 

Let $s \in \bS$. As $\xi$, $W$, $S$ are independent under $\Prb$, $W$ is a Wiener process also under the conditional probability $\Prb_{s}\doteq \Prb(\,.\mid S=s)$. Moreover, $(\xi,\hat{X},W,\bar{\tau})$ has the same distribution under $\Prb_{s}$ as under $\Prb$ and, by the strong Markov property, $W(\bar{\tau}+.)-W(\bar{\tau})$ is a Wiener process under $\Prb$ as well as under $\Prb_{s}$.

For $s \in \bS$, let $\bar{X}_{s}$ be the (path-wise) unique solution to
\begin{equation} \label{EqMarkovSDEknown}
	\bar{X}_{s}(t) = \hat{X}(\bar{\tau}) + \int_{0}^{t} u_{s}\bigl(\bar{\tau} + r,\bar{X}_{s}(r)\bigr)dr + \sigma \left(W(\bar{\tau}+t) - W(\bar{\tau}) \right),\quad t\geq 0.
\end{equation}
Since $W(\bar{\tau}+.)-W(\bar{\tau})$ and $(\bar{\tau}, \hat{X}(\bar{\tau}))$ are independent under $\Prb$ as well as under $\Prb_{s}$, we can again invoke (the proof of) Theorem~2.8 in \citet{gyongykrylov96}. This yields the existence of the path-wise unique solution $\bar{X}_{s}$ to Eq.~\eqref{EqMarkovSDEknown}. Moreover, $\bar{X}_{s}$ is adapted to the filtration generated by $\hat{X}(\bar{\tau})$ and $W(\bar{\tau}+.)-W(\bar{\tau})$.

Define the $\bR^{d}$-valued position process $X$ according to
\begin{equation*}
	X(t) \doteq
	\begin{cases}
		\hat{X}(t) &\text{if } t < \tau, \\
		\bar{X}_{s}(t-\tau) &\text{if } t \geq \tau \text{ and } S = s,
	\end{cases}
	\quad t\in [0,T],
\end{equation*}
and define the non-negative process $Z$ by
\begin{equation*}
	Z(t) \doteq \begin{cases}
	\Psi\bigl(t,\hat{X}(t)\bigr) &\text{if } t < \tau,\\
	0 &\text{if } t \geq \tau,
	\end{cases}
	\quad t\in [0,T].
\end{equation*}
Then, by definition of $\tau$ and continuity of $\Psi$ (before $T$) and $\hat{X}$, we have for $\Prb$-almost all as well as $\Prb_{s}$-almost all $\omega \in \Omega$ that for all $t\in [0,T]$,
\begin{align} \label{EqMarkovZtau}
	& t < \tau(\omega) \text{ if and only if } Z(t,\omega) > 0, & & t \geq \tau(\omega) \text{ if and only if } Z(t,\omega) = 0. &
\end{align}
Define $b\colon \bS \times [0,T]\times \bR^{d}\times \mathbb{R} \rightarrow \Gamma$ by
\[
	b_{s}(t,x,z) \doteq \begin{cases}
		\hat{u}(t,x) &\text{if } z > 0,\\
		u_{s}(t,x) &\text{if } z \leq 0.
	\end{cases}
\]
The position process $X$ then solves $\Prb$-almost surely,
\begin{equation} \label{EqMarkovSDE}
	X(t) = \xi + \int_{0}^{t} b_{S}\bigl(r,X(r),Z(r)\bigr)dr + \sigma W(t),\quad t\in [0,T],
\end{equation}
and $\Prb_{s}$-almost surely,
\begin{equation} \label{EqMarkovSDEcond}
	X(t) = \xi + \int_{0}^{t} b_{s}\bigl(r,X(r),Z(r)\bigr)dr + \sigma W(t),\quad t\in [0,T].
\end{equation}

Let $\phi \in \mathbf{C}^{2}_{c}(\bR^{d}\times \bR)$ be a twice continuously differentiable test function with compact support. Then, in view of \eqref{EqMarkovZtau}, for all $t\in [0,T]$,
\begin{equation} \label{EqMarkovTestfnct}
	\phi\bigl( X(t), Z(t) \bigr) = \phi\bigl( X(t\wedge\tau), Z(t\wedge\tau) \bigr) - \phi\bigl( X(t\wedge\tau), 0 \bigr) + \phi\bigl( X(t), 0 \bigr).
\end{equation}

Let $\Mean_{s}$ denote expectation with respect to $\Prb_{s}$. Then, by It\^{o}'s formula and Fubini's theorem, for all $t\in [0,T]$,
\begin{multline*}
	\Mean_{s}\left[ \phi\bigl( X(t),0 \bigr)\right] = \Mean_{s}\left[ \phi(\xi,0)\right] \\
	+ \int_{0}^{t} \Mean_{s}\left[ \left\langle b_{s}\bigl(r,X(r),Z(r)\bigr), \nabla_{x} \phi\bigl( X(r),0 \bigr) \right\rangle + \frac{\sigma^2}{2} \Delta_{x} \phi\bigl( X(r),0 \bigr) \right]dr.
\end{multline*}
Similarly, recalling \eqref{EqMarkovZtau},
\begin{multline*}
	\Mean_{s}\left[ \phi\bigl( X(t\wedge\tau),0 \bigr)\right] = \Mean_{s}\left[ \phi(\xi,0)\right] \\
	+  \int_{0}^{t} \Mean_{s}\left[ \mathbf{1}_{(0,\infty)}(Z(r))\cdot \left(  \left\langle b_{s}\bigl(r,X(r),Z(r)\bigr), \nabla_{x} \phi\bigl( X(r),0 \bigr) \right\rangle + \frac{\sigma^2}{2} \Delta_{x} \phi\bigl( X(r),0 \bigr) \right)\right] dr.
\end{multline*}
Moreover, again by It\^{o}'s formula and Fubini's theorem, using the regularity assumptions on $\Psi$ and the boundary of $D$, we find that
\begin{multline*}
	\Mean_{s}\left[ \phi\bigl( X(t\wedge\tau), Z(t\wedge\tau) \bigr)\right] = \Mean_{s}\left[ \phi\bigl(\xi, \Psi(0,\xi)\vee 0 \bigr)\right] \\
	+ \int_{0}^{t} \Mean_{s}\left[ \mathbf{1}_{(0,\infty)}(Z(r))\cdot \Bigl(  \left\langle b_{s}\bigl(r,X(r),Z(r)\bigr), \nabla_{x} \phi\bigl(X(r),Z(r)\bigr) \right\rangle + \frac{\sigma^2}{2} \Delta_{x} \phi\bigl(X(r),Z(r)\bigr) \Bigr) dr \right] \\
	+ \int_{0}^{t} \Mean_{s}\left[ \mathbf{1}_{(0,\infty)}(Z(r))\cdot \frac{\partial \phi}{\partial z}\bigl(X(r),Z(r)\bigr) \cdot  \left\langle b_{s}\bigl(r,X(r),Z(r)\bigr), \nabla_{x} \Psi\bigl(r,X(r)\bigr) \right\rangle  dr \right] \\
	+ \int_{0}^{t} \Mean_{s}\left[ \mathbf{1}_{(0,\infty)}(Z(r))\cdot \frac{\partial \phi}{\partial z}\bigl(X(r),Z(r)\bigr) \cdot  \Bigl( \frac{\partial}{\partial r}\Psi + \frac{\sigma^2}{2} \Delta_{x} \Psi \Bigr) \bigl(r,X(r)\bigr) dr \right] \\
	+ \int_{0}^{t} \Mean_{s}\left[ \mathbf{1}_{(0,\infty)}(Z(r))\cdot \frac{\partial^{2} \phi}{\partial z^2}\bigl(X(r),Z(r)\bigr) \cdot  \frac{\sigma^2}{2} \left| \nabla_{x} \Psi\bigl(r,X(r)\bigr) \right|^{2}  dr \right] \\
	+ \int_{0}^{t} \Mean_{s}\left[ \mathbf{1}_{(0,\infty)}(Z(r))\cdot \sigma^{2} \left\langle \nabla_{x} \frac{\partial \phi}{\partial z}\bigl(X(r),Z(r)\bigr), \nabla_{x} \Psi\bigl(r,X(r)\bigr) \right\rangle  dr \right].
\end{multline*}

Let $\check{\rho}$ denote the flow of conditional distributions of $(X,Z)$ given $S$; that is,
\[
	\check{\rho}_{s}(t) \doteq \Prb_{s} \circ (X(t),Z(t))^{-1},\quad t\in [0,T], \quad s\in \bS.
\]
Then, in view of \eqref{EqMarkovTestfnct}, for all test functions $\phi \in \mathbf{C}^{2}_{c}(\bR^{d}\times \bR)$, all $t\in [0,T]$,
\begin{multline} \label{EqKolmogorovFs}
	\int_{\bR^{d+1}} \phi(x,z) \check{\rho}_{s}(t;dx,dz) = \int_{\bR^d} \phi\bigl(x,\Psi(0,x)\vee 0 \bigr) \nu(dx) \\
	+ \int_{0}^{t} \int_{\bR^{d+1}} \left( \left\langle b_{s}(r,x,z), \nabla_{x} \phi(x,z) \right\rangle + \frac{\sigma^2}{2} \Delta_{x} \phi(x,z) \right) \check{\rho}_{s}(r;dx,dz)\, dr \\
	+ \int_{0}^{t} \int_{\bR^{d+1}} \mathbf{1}_{(0,\infty)}(z) \cdot \frac{\partial \phi}{\partial z}(x,z)\cdot \left( \left\langle b_{s}(r,x,z), \nabla_{x} \Psi(r,x) \right\rangle + \frac{\partial}{\partial r}\Psi(r,x) + \frac{\sigma^2}{2} \Delta_{x} \Psi(r,x) \right) \check{\rho}_{s}(r;dx,dz)\, dr \\
	+ \int_{0}^{t} \int_{\bR^{d+1}} \mathbf{1}_{(0,\infty)}(z) \cdot \frac{\partial^{2} \phi}{\partial z^2}(x,z) \cdot  \frac{\sigma^2}{2} \left| \nabla_{x} \Psi(r,x) \right|^{2} \check{\rho}_{s}(r;dx,dz)\, dr \\
	+ \int_{0}^{t} \int_{\bR^{d+1}} \mathbf{1}_{(0,\infty)}(z) \cdot \sigma^{2} \left\langle \nabla_{x} \frac{\partial \phi}{\partial z}(x,z), \nabla_{x} \Psi(r,x) \right\rangle \check{\rho}_{s}(r;dx,dz)\, dr.
\end{multline}

Now, suppose that the solutions $V_{s,\rho}$ and $\hat{V}_{\rho}$ of the Hamilton-Jacobi-Bellman equations \eqref{EqHJBVs} and \eqref{EqHJBVhat} are sufficiently regular. The feedback function $b$ will then be given by
\begin{equation} \label{EqOptFeedback}
b_{s}(t,x,z) =
\begin{cases}
	\argmin_{a \in \Gamma}\left\{ \tilde{f}_{\rho}(t, x, a) + \left\langle a, \nabla_{x}\hat{V}_{\rho}(t,x) \right\rangle \right\} &\text{if } z > 0, \\
	\argmin_{a \in \Gamma}\left\{ f\bigl(t, x, s, a, \rho_{s}(t)\bigr) + \left\langle a, \nabla_{x}V_{s,\rho}(t,x) \right\rangle \right\} &\text{otherwise},
\end{cases}
\end{equation}
while the function $\Psi$, which defines the continuation region, is given by
\begin{equation} \label{EqOptPsi}
	\Psi(t,x) = G_{\rho}(t,x) - \hat{V}_{\rho}(t,x).
\end{equation}
The mean field game system is therefore determined by the Hamilton-Jacobi-Bellman equations \eqref{EqHJBVs} and \eqref{EqHJBVhat} for the value functions $V_{s,\rho}$, $s\in \bS$, and $\hat{V}_{\rho}$, respectively, and the Kolmogorov forward equation \eqref{EqKolmogorovFs} for the flow of conditional distributions $\check{\rho}$ with drift coefficients $b_s$, $s\in \bS$, given by \eqref{EqOptFeedback} and function $\Psi$ given by \eqref{EqOptPsi}. In equilibrium, the conditional flow of measures $\rho$ relates to $\check{\rho}$ according to
\begin{align} \label{EqRhoCheckRho}
	& \rho_{s}\bigl(t; B\times \{0\}\bigr) = \check{\rho}_{s}\bigl(t;B\times (0,\infty) \bigr), & && \\
	\nonumber & \rho_{s}\bigl(t; B\times \{1\}\bigr) = \check{\rho}_{s}\bigl(t;B\times (-\infty,0] \bigr), & & B \in \cB(\bR^{d}),\; t\in [0,T],\; s\in \bS.&
\end{align}
Thus, we can see the value functions $V_{s,\rho}$, $s\in \bS$, $\hat{V}_{\rho}$ as parameterized by the flow of conditional distributions $\check{\rho}$ instead of the conditional flow of measures $\rho$.

Notice that the proportion of players who at time $t$ do \emph{not} know the value of $S$ will in equilibrium be given by
\[
	 \rho_{s}\bigl(t; \bR^{d} \times \{0\}\bigr) = \check{\rho}_{s}\bigl(t;\bR^{d}\times (0,\infty) \bigr) = \Prb\left( t < \tau_{\ast} \right),
\]
where $\tau_{\ast}$ is the optimal buying time associated with a mean field game solution $\rho$.

We do not use the system of equations \eqref{EqHJBVs}, \eqref{EqHJBVhat}, \eqref{EqKolmogorovFs} together with relations \eqref{EqOptPsi} and \eqref{EqRhoCheckRho} here to state a verification theorem. In Section~\ref{SectExample} below, we will however give a simple explicit example where candidate solutions can be verified ``by hand'' based on the above mean field game system of equations.


\section{A linear-quadratic example} \label{SectExample}

Here, we provide an example of a simple linear-quadratic mean field game with the option to buy information. Choose the data as follows:
\begin{itemize}
	\item dimension $d = 1$;
	
	\item action space $\Gamma = \bR$;
	
	\item hidden state space $\bS = \{-1,1\}$, hence hidden state distribution $\mu = \text{Rademacher}(q_{\bS})$ for some $q_{\bS}\in [0,1]$;
	
	\item initial position distribution $\nu$ equal to a Gaussian distribution with mean zero and variance $\sigma_{\nu}^{2} \geq 0$;
	
	\item running costs $f(t,x,s,a,m) = \frac{1}{2}a^2$;
	
	\item information costs $h(t,x) = c_{2}\cdot \mathbf{1}_{[0,T)}(t)$
	for some constant $c_{2} > 0$;
	
	\item terminal costs
	\[
		g(x,s,m) = c_{0}\cdot (x-s)^2 + x\cdot \int \mathfrak{g}\,dm,
	\]
	where $c_{0} > 0$ is a positive constant and $\mathfrak{g}\colon \bR \rightarrow \bR$ a bounded continuous function to be specified below; and we also identify it with its natural extension to a function on $\bR\times \{0,1\}$.

\end{itemize}
We can choose the zero state $s_{0}$ arbitrarily in $\bS$; let us choose $s_{0} \doteq -1$. Let $\rho$ be a conditional flow of measures. For $s\in \bS$, set
\[
	c_{s,\rho}\doteq \int_{\bR\times \{0,1\}} \mathfrak{g}(y)\, \rho_{s}(T;dy).
\]
 In the notation of Section~\ref{SectMixedCP}, before Eq.~\eqref{ExVhat}, we have
\begin{align*}
	& \tilde{f}_{\rho}(t,x,a) = \frac{1}{2}a^{2},& &\tilde{V}_{\rho}(t,x) = q_{\bS}\cdot V_{1,\rho}(t,x) + (1-q_{\bS})\cdot V_{-1,\rho}(t,x),& \\
	& G_{\rho}(t,x) = h(t,x) + \tilde{V}_{\rho}(t,x),& &&
\end{align*}
where $V_{s,\rho}$, $s\in \{-1,1\}$, are the full information value functions uniquely determined by the \mbox{HJB} equation \eqref{EqHJBVs} with $d = 1$, terminal costs
\[
	g\bigl(x,s,\rho_{s}(T)\bigr) = c_{0} \left(x-s\right)^{2} + c_{s,\rho}\cdot x,
\]
and reduced Hamiltonian
\[
	H_{s,\rho}(t,x,p) = \inf_{a\in\bR} \left\{ \frac{1}{2}a^{2} + a\cdot p\right\} = -\frac{1}{2}p^{2}.
\]
Due to the linear-quadratic structure, the value functions $V_{s,\rho}$, $s\in \{-1,1\}$, are given explicitly by
\begin{equation} \label{EqExampleVsfull} 
	V_{s,\rho}(t,x) = \varphi^{(2)}(t)\cdot x^{2} + \varphi^{(1)}_{s,\rho}(t)\cdot x + \varphi^{(0)}_{s,\rho}(t),
\end{equation}
where
\begin{align*}
	\varphi^{(2)}(t) &\doteq \frac{c_{0}}{1+2c_{0}(T-t)},
	\qquad \varphi^{(1)}_{s,\rho}(t) \doteq \frac{c_{s,\rho} - 2c_{0}\cdot s}{1+2c_{0}(T-t)}, \\
	\varphi^{(0)}_{s,\rho}(t) &\doteq  \frac{\sigma^2}{2} \log\bigl( 1+2c_{0}(T-t)\bigr) + \frac{c_{0}\cdot s^{2} + (T-t) (2c_{0}\cdot s \cdot c_{s,\rho} - c_{s,\rho}^{2}/2)}{1+2c_{0}(T-t)}.
\end{align*}
The fact that the function given by \eqref{EqExampleVsfull}, for $s\in \{-1,1\}$, solves the \mbox{HJB} equation \eqref{EqHJBVs} with $d=1$, terminal costs $g(.,s,\rho_{s}(T))$ and reduced Hamiltonian $H_{s,\rho}$ as above can be checked directly by computing partial derivatives. Notice that it is a classical solution. A standard verification argument based on It{\^o}'s formula then shows that Eq.~\eqref{EqExampleVsfull} gives indeed the full information value function $V_{s,\rho}$.

It follows that
\begin{equation} \label{EqExampleVtilde}
	\tilde{V}_{\rho}(t,x) = \varphi^{(2)}(t)\cdot x^{2} + \tilde{\varphi}^{(1)}_{\rho}(t)\cdot x + \tilde{\varphi}^{(0)}_{\rho}(t),
\end{equation}
where $\varphi^{(2)}$ is defined as above and $\tilde{\varphi}^{(i)}_{\rho} \doteq q_{\bS}\varphi^{(i)}_{1,\rho} + (1-q_{\bS})\varphi^{(i)}_{-1,\rho}$, $i\in \{0,1\}$.

Let us specialize our set-up to:

\paragraph{Symmetric case.} Choose $q_{\bS} = 1/2$, hence hidden state distribution $\mu = \text{Rademacher}(1/2)$. Take $\mathfrak{g}$ anti-symmetric in the sense that
\begin{equation} \label{EqExampleAntiSymmetry}
	-\mathfrak{g}(x) = \mathfrak{g}(-x) \quad\text{for all }x\in \bR,
\end{equation}
and consider only flows of measures $\rho$ which are symmetric in the sense that
\begin{equation} \label{EqExampleSymmetry}
		\rho_{1}(T;B\times \{0,1\}) = \rho_{-1}(T;(-B)\times \{0,1\}) \text{ for all }B\in \cB(\bR).
\end{equation}
Then
\[
	-c_{-1,\rho} = 
	\int_{\bR\times \{0,1\}} \mathfrak{g}(-y)\, \rho_{-1}(T;dy) = \int_{\bR\times \{0,1\}} \mathfrak{g}(y)\, \rho_{1}(T;dy) = c_{1,\rho}.
\]
By symmetry and \eqref{EqExampleVtilde}, we then have
\begin{equation} \label{EqExampleVtildesym}
	\tilde{V}_{\rho}(t,x) = \varphi^{(2)}(t)\cdot x^{2} + \tilde{\varphi}^{(0)}_{\rho}(t),
\end{equation}
where $\tilde{\varphi}^{(1)}$ has vanished and $\tilde{\varphi}^{(0)}$ becomes
\[
	\tilde{\varphi}^{(0)}_{\rho}(t) = \frac{\sigma^2}{2} \log\bigl( 1+2c_{0}(T-t)\bigr) + \frac{c_{0} + (T-t) (2c_{0}c_{1,\rho} - c_{1,\rho}^{2}/2)}{1+2c_{0}(T-t)}.
\]
Recall that the information costs are constant before terminal time. Hence,
\[
	G_{\rho}(t,x) = \tilde{V}_{\rho}(t,x) + h(t,x) = \begin{cases}
		\tilde{V}_{\rho}(T,x) &\text{if } t = T,\\
		\tilde{V}_{\rho}(t,x) + c_{2} &\text{if } t\in  [0,T).
	\end{cases}
\]
Let $\tilde{v}$ be the unique classical solution (of sub-exponential growth) to the \mbox{HJB} equation
\begin{equation*}
	\begin{cases}
		\tilde{v}(T, x) = G_{\rho}(T,x), & x \in \bR, \\
		\frac{\partial}{\partial t} \tilde{v}(t, x) -\frac{1}{2} \left(\nabla_{x} \tilde{v}(t,x) \right)^{2} + \frac{\sigma^{2}}{2} \Delta_{x} \tilde{v}(t, x) = 0,  &(t, x) \in[0, T) \times \bR.
	\end{cases}
\end{equation*}
Since
\[
	G_{\rho}(T,.) = \tilde{V}_{\rho}(T,.) = c_{0}\left(x^{2} + 1\right),
\]
using a linear-quadratic ansatz (or, alternatively, the Hopf-Cole transformation), we find that $\tilde{v}$ is given explicitly by
\[
	\tilde{v}(t,x) = \varphi^{(2)}(t)\cdot x^{2} + c_{0} + \frac{\sigma^2}{2} \log\bigl( 1+2c_{0}(T-t)\bigr).
\]
Notice that $\tilde{v}$ does not depend on the conditional flow of measures $\rho$ because $G_{\rho}(T,.)$ does not depend on $\rho$, whereas $G_{\rho}(t,.)$ for $t < T$ does. It follows that $G_{\rho}(T,.) - \tilde{v}(T,.) = 0$ and, for $t < T$,
\[
	G_{\rho}(t,x) - \tilde{v}(t,x) = c_{2} - \frac{2(T-t) (c_{0} - c_{1,\rho}/2)^{2}}{1+2c_{0}(T-t)}.
\]
Observe that the right-hand side above does not depend on $x\in \bR$. Denote it by $\psi_{\rho}$, that is, set
\[
	\psi_{\rho}(t)\doteq c_{2} - \frac{2(T-t) (c_{0} - c_{1,\rho}/2)^{2}}{1+2c_{0}(T-t)},\quad t\in [0,T).
\]
Since $c_{0}, c_{2} > 0$ by assumption, we have that $\lim_{t\to T-}\psi_{\rho}(t) = c_{2} > 0$ and that $\psi_{\rho}(.)$ is continuous non-decreasing on $[0,T)$. Now, we distinguish whether or not $\psi_{\rho}(t) > 0$ for all $t\in [0,T)$. By monotonicity, $\psi_{\rho}(t) > 0$ for all $t\in [0,T)$ is equivalent to $\psi_{\rho}(0) > 0$. Now,
\[
	\psi_{\rho}(0) = c_{2} - \frac{2T(c_{0} - c_{1,\rho}/2)^{2}}{1+2c_{0}T} > 0
\]
if and only if
\begin{equation} \label{EqExampleNotBuy}
	2c_{0} - 2\sqrt{\frac{c_{2}(1+2c_{0}T)}{2T}} < c_{1,\rho} < 2c_{0} + 2\sqrt{\frac{c_{2}(1+2c_{0}T)}{2T}}.
\end{equation}

Let $\Psi$ be as in Section~\ref{SectMFGSystem}. Then
\[
\Psi(t,x) = \Psi_{\rho}(t,x) = \begin{cases}
	\psi_{\rho}(t) &\text{if } t<T, \\
	0 &\text{if } t = T.
\end{cases}
\]
The continuation region $D = D_{\rho}$ is thus given by
\[
D = \left\{ (t,x) \in [0,T]\times \bR^{d} : \Psi(t,x) > 0 \right\} = \begin{cases}
	[0,T) \times \bR^{d} &\text{if \eqref{EqExampleNotBuy} holds}, \\
	(t_{\ast},T) \times \bR^{d} &\text{else},
\end{cases}
\]
where $t_{\ast} = t_{\ast}(\rho)$ is the unique zero of $\psi_{\rho}$ in $[0,T)$ (in the case that \eqref{EqExampleNotBuy} does not hold).

If $\rho$ is symmetric in the sense of \eqref{EqExampleSymmetry} and such that \eqref{EqExampleNotBuy} holds, then the representative player will \emph{not} buy information on $S$ before terminal time. In this case, she will use the feedback strategy derived from $\tilde{v}$, namely (the control action at terminal time is irrelevant)
\begin{equation} \label{EqExampleFeedbackNoInfo}
	\hat{u}(t,x)\doteq -\frac{\partial}{\partial x} \tilde{v}(t,x) = -2\phi^{(2)}(t)\cdot x,\quad (t,x)\in [0,T]\times \bR.
\end{equation}

Else if $\rho$ is symmetric and \eqref{EqExampleNotBuy} does not hold, then the representative player will buy the information on $S$ at time zero. Consequently, in this case, she will apply the feedback strategy derived from $V_{s,\rho}$, $s\in \{-1,1\}$, according to the value of $S$, namely
\begin{equation} \label{EqExampleFeedbackInfo}
	u_{s,\rho}(t,x)\doteq -\frac{\partial}{\partial x} V_{s,\rho}(t,x) = -2\phi^{(2)}(t)\cdot x - \phi^{(1)}_{s,\rho}(t),\quad (t,x)\in [0,T]\times \bR.
\end{equation}
Since $\phi^{(2)}$ does not depend on $\rho$ and $\phi^{(1)}_{s,\rho}$ depends on $\rho$ only through $c_{s,\rho}$, we have $u_{s,\rho} = u_{s,c_{s,\rho}}$ where we set, for $\bar{c}\in \bR$,
\[
	u_{s,\bar{c}}(t,x)\doteq -2\phi^{(2)}(t)\cdot x - \frac{\bar{c} - 2c_{0}\cdot s}{1+2c_{0}(T-t)}.
\]

In the notation of Section~\ref{SectMFGSystem}, let $\hat{X}$ be the unique solution to Eq.~\eqref{EqMarkovSDEhat} with $\hat{u}$ given by \eqref{EqExampleFeedbackNoInfo}. For $s\in \{-1,1\}$, $\bar{c}\in \bR$, let $\bar{X}_{s,\bar{c}}$ be the solution to Eq.~\eqref{EqMarkovSDEknown} with $u_{s} = u_{s,\bar{c}}$ according to \eqref{EqExampleFeedbackInfo} and stopping time $\bar{\tau} = 0$. Indeed, the solutions $\bar{X}_{s,\bar{c}}$, $s\in \{-1,1\}$, will be of interest only if \eqref{EqExampleNotBuy} does not hold. The initial position $\xi$ in \eqref{EqMarkovSDEhat}, \eqref{EqMarkovSDEknown} is independent of the driving Wiener process and has distribution $\nu$, which here is assumed to be Gaussian with mean zero and variance $\sigma_{\nu}^{2}$. As the feedback strategies here are affine-linear and the Wiener noise additive, Eqs.\ \eqref{EqMarkovSDEhat}, \eqref{EqMarkovSDEknown} can be solved explicitly; see, for instance, Section~5.6.C in \citet[pp.\ 360--361]{karatzasshreve91}. In particular, $\hat{X}$, $\bar{X}_{s,\bar{c}}$, $s\in \{-1,1\}$, are Gaussian processes,
\begin{subequations} \label{EqExampleMeanVar}
\begin{align}
	&\Mean\left[ \hat{X}(T) \right] = 0, &
	& \Mean_{s}\left[ \bar{X}_{S,\bar{c}}(T) \right] = \Mean\left[ \bar{X}_{s,\bar{c}}(T) \right] = \frac{(2c_{0}s-\bar{c})T}{1+2c_{0}T}, &
\end{align}
and the variance of $\hat{X}(T)$, $\bar{X}_{s,\bar{c}}(T)$ under $\Prb$ as well as $\Prb_{s}$, $s\in \{-1,1\}$, is equal to
\begin{equation}
	\frac{\sigma_{\nu}^{2}}{(1+2c_{0}T)^2} + \frac{\sigma^{2}T}{1+2c_{0}T}.
\end{equation}
\end{subequations}
Recall that any conditional flow of measures $\rho$ here is assumed to be symmetric in the sense of \eqref{EqExampleSymmetry}. In order to identify those $\rho$ which satisfy the consistency condition of Definition~\ref{DefMFGstrong}, consider the following two cases:
\begin{enumerate}
	
	\item \label{ExampleCaseNoBuy} Set $\hat{c}\doteq \Mean\left[ \mathfrak{g}\bigl(\hat{X}(T)\bigr) \right]$. If \eqref{EqExampleNotBuy} holds with $\hat{c}$ in place of $c_{1,\rho}$, define
	\begin{align*}
		& \rho_{s}(t) \doteq \Prb_{s}\circ \left( \hat{X}(t), \mathbf{1}_{\{T\}}(t) \right)^{-1},\quad t\in [0,T],&  &s \in \{-1,1\}. &
	\end{align*}
	Then $\rho_{s}(t) = \Prb\circ \left( \hat{X}(t), \mathbf{1}_{\{T\}}(t) \right)^{-1}$ for all $t\in [0,T]$, $s \in \{-1,1\}$, hence $\hat{c} = c_{1,\rho}$. In this case, $\rho$ is a (strong) solution of the mean field game, although it has collapsed to one unconditional flow of measures.
	
	\item \label{ExampleCaseInfo} For $\bar{c}\in \bR$ such that \eqref{EqExampleNotBuy} does \emph{not} hold with $\bar{c}$ in place of $c_{1,\rho}$, that is,
	\begin{align} \label{EqExampleBuyCondition1}
		& \bar{c} \leq 2c_{0} - 2\sqrt{\frac{c_{2}(1+2c_{0}T)}{2T}} & &\text{or}& &\bar{c} \geq 2c_{0} + 2\sqrt{\frac{c_{2}(1+2c_{0}T)}{2T}},
	\end{align}
	set
	\begin{align*}
		& \rho_{1}(t) \doteq \Prb_{1}\circ \left( \bar{X}_{1,\bar{c}}(t), 1 \right)^{-1},& & \rho_{1}(t) \doteq \Prb_{-1}\circ \left( \bar{X}_{-1,-\bar{c}}(t), 1 \right)^{-1},& &t\in [0,T].&
	\end{align*}
	Then $\rho$ is a conditional flow of measures. Consistency requires (by symmetry it is enough to condition on $S = 1$) that
	\begin{equation} \label{EqExampleBuyCondition2}
		\Mean\left[ \mathfrak{g}\bigl(\bar{X}_{1,\bar{c}}(T)\bigr) \right] = \bar{c}.
	\end{equation}
	If both conditions \eqref{EqExampleBuyCondition1} and \eqref{EqExampleBuyCondition2} hold, then $\rho$ is also a (strong) solution of the mean field game.
	
\end{enumerate}

In Case~\ref{ExampleCaseNoBuy}, we actually have $\hat{c} = 0$ for $\mathfrak{g}$ anti-symmetric according to \eqref{EqExampleAntiSymmetry} because $\hat{X}(T)$ has a symmetric distribution under $\Prb$, being Gaussian with mean zero. Now \eqref{EqExampleNotBuy} holds with $0$ in place of $c_{1,\rho}$ if and only if
\[
	c_{0} < \sqrt{\frac{c_{2}(1+2c_{0}T)}{2T}}.
\]
Recalling that all constants in the display above are strictly positive, this is equivalent to
\begin{align} \label{EqExampleNoBuyCondition}
	&\text{either}& & c_{0} \leq c_{2}&  &\text{or}&  &c_{0} > c_{2} \text{ and } T < \frac{c_{2}}{2c_{0}(c_{0}-c_{2})}.&
\end{align}
Thus, there exists a solution of the mean field game where no-one buys information on the hidden state either when information is too expensive, according to the first part of \eqref{EqExampleNoBuyCondition}, or when information is not too expensive but the time horizon is too short according to the second part of \eqref{EqExampleNoBuyCondition}.

To find explicit solutions in Case~\ref{ExampleCaseInfo}, let us specify $\mathfrak{g}$. For example, set
\[
	\mathfrak{g}(x) \doteq x\cdot e^{-x^2},\quad  x\in \bR.
\]
Then $\mathfrak{g}$ is bounded, continuous, and antisymmetric with respect to the origin. Moreover, if $Z$ is a Gaussian random variable with mean $b$ and variance $a$, then
\[
	\Mean\left[\mathfrak{g}(Z)\right] = \frac{b}{(1+2a)^{3/2}}\cdot \exp\left(-\frac{b^2}{1+2a}\right).
\]
In view of \eqref{EqExampleMeanVar}, choose
\[
	a\doteq \frac{\sigma_{\nu}^{2}}{(1+2c_{0}T)^2} + \frac{\sigma^{2}T}{1+2c_{0}T}
\]
and $b = b(\bar{c})$ with
\[
	b(\bar{c})\doteq \frac{(2c_{0}-\bar{c})T}{1+2c_{0}T},\quad \bar{c}\in \bR\setminus (\bar{c}_{-},\bar{c}_{+}),
\]
where
\[
	\bar{c}_{\pm}\doteq 2c_{0} \pm 2\sqrt{\frac{c_{2}(1+2c_{0}T)}{2T}}.
\]
Thus, $\bar{c}\in \bR\setminus (\bar{c}_{-},\bar{c}_{+})$ if and only if condition \eqref{EqExampleBuyCondition1} holds.

Define $\phi\colon \bR \rightarrow \bR$ by
\[
	\phi(\bar{c}) \doteq \frac{(2c_{0}-\bar{c})T(1+2c_{0}T)^2}{K^{3/2}} \cdot \exp\left(-\frac{(2c_{0}-\bar{c})^2 T^2}{K}\right)
\]
where
\[
	K \doteq (1 + 2c_{0}T)^2 + 2\sigma^2 T(1 + 2c_{0}T) + 2\sigma_\nu^2.
\]
Moreover, define $\Phi\colon \bR \rightarrow \bR$ by
\[
	\Phi(\bar{c}) \doteq \phi(\bar{c}) - \bar{c}.
\]
Conditions \eqref{EqExampleBuyCondition1} and \eqref{EqExampleBuyCondition2} are then equivalent to
\begin{align*}
	&\bar{c}\in \bR\setminus (\bar{c}_{-},\bar{c}_{+})& &\text{and}& &\Phi(\bar{c}) = 0.&
\end{align*}
Clearly, $\Psi$ is continuously differentiable. Since $\psi(\bar{c}) \to 0$ as $\bar{c} \to \pm \infty$, we have
\begin{align*}
	& \lim_{\bar{c}\to-\infty} \Phi(\bar{c}) = \infty,& & \lim_{\bar{c}\to \infty} \Phi(\bar{c}) = -\infty. & 
\end{align*}
Now, $\phi(\bar{c}) < 0$ for $\bar{c} \geq \bar{c}_{+}$, hence $\Phi(\bar{c}) < 0$ for all $\bar{c} \geq \bar{c}_{+}$. Consequently, we are looking for zeros of $\Phi$ only on $(-\infty,\bar{c}_{-}]$. A sufficient condition for the existence of such zeros is
\[
	\Phi(\bar{c}_{-}) \leq 0,
\]
that is,
\begin{equation} \label{EqExampleBuySufficient}
	\begin{split}
	\frac{\sqrt{c_{2}(1+2c_{0}T)T}(1+2c_{0}T)^2}{\sqrt{2}((1 + 2c_{0}T)^2 + 2\sigma^2 T(1 + 2c_{0}T) + 2\sigma_\nu^2)^{3/2}} \cdot \exp\left(-\frac{2c_{2}(1+2c_{0}T) T}{(1 + 2c_{0}T)^2 + 2\sigma^2 T(1 + 2c_{0}T) + 2\sigma_\nu^2}\right) \\
	\leq c_{0} - \sqrt{\frac{c_{2}(1+2c_{0}T)}{2T}}.
	\end{split}
\end{equation}
Notice that \eqref{EqExampleBuySufficient} holds if $c_{2}$ is small enough compared to $c_{0}$ and $T$. Therefore, if the system parameters satisfy \eqref{EqExampleBuySufficient}, then there exists $\bar{c}\in (-\infty,\bar{c}_{-}]$ such that conditions \eqref{EqExampleBuyCondition1}, \eqref{EqExampleBuyCondition2} hold and the conditional flow of measures $\rho = \rho(\bar{c})$ as defined in Case~\ref{ExampleCaseInfo} above is a strong solution of the mean field game.


\section{Compatible $N$-player games}\label{SectApproximation}

In this section, we introduce $N$-player games that are compatible with the mean field game studied above in the sense that (strong) solutions to the mean field game induce approximate $N$-player Nash equilibria with approximation error tending to zero as $N$ goes to infinity. The information structure of the $N$-player game will mimic that of the mean field game. For the construction of approximate $N$-player Nash equilibria, it will be convenient to define all $N$-player games on the same probability space.

Let $(\hat{\Omega},\hat{\cF}, \hat{\Prb})$ be a complete probability space with $\hat{\bF}=\{\hat{\cF}_t\}_{t\ge0}$ a complete filtration in $\hat{\cF}$ carrying $d$-dimensional $\bF$-Wiener processes $W_{1},W_{2},\ldots$ starting in zero, an $\cF_0$-measurable $\bS$-valued random variable $\hat{S}$ with distribution $\mu$, and identically distributed $\cF_0$-measurable $\bR^d$-valued random variables $\xi_{1}, \xi_{2},\ldots$ with common distribution $\nu$ such that
\[
	\hat{S}, \xi_{1}, \xi_{2},\ldots, W_{1},W_{2},\ldots \text{ are all independent}.
\]

For $N\in \bN\setminus\{1\}$, let $\boldsymbol{W}^{N}$ denote the $N\times d$-dimensional Wiener process $(W_{1},\ldots,W_{N})$, let $\boldsymbol{\xi}^{N}$ denote the $\bR^{N\times d}$-valued random variable $(\xi_{1},\ldots,\xi_{N})$, and let $\bF^{N,\boldsymbol{\xi},\boldsymbol{W}}$ denote the filtration generated by $\boldsymbol{\xi}^{N}$ and $\boldsymbol{W}^{N}$:
\[
	\cF^{N,\boldsymbol{\xi},\boldsymbol{W}}_t \doteq \boldsymbol{\sigma}\left( \xi_{i}, W_{i}(r) : r \in [0,t],\; i\in \{1,\ldots,N\} \right), \quad t \geq 0.
\]
Let $\cT^{\boldsymbol{\xi}}_{T,N}$ denote the set of all $[0,T]$-valued $\bF^{N,\boldsymbol{\xi},\boldsymbol{W}}$-stopping times. For $\tau \in \cT^{\boldsymbol{\xi}}_{T,N}$, let $\bF^{N,\boldsymbol{\xi},\boldsymbol{W},S(\tau)}$ denote the filtration given by
\[
	\cF^{N,\boldsymbol{\xi},\boldsymbol{W},\hat{S}(\tau)}_t \doteq 
\boldsymbol{\sigma}\left(\xi_{i}, W_{i}(r), \hat{S}\cdot \mathbf{1}_{[\tau,\infty)}(r) : r\in [0,t],\; i\in \{1,\ldots,N\} \right), \quad t\geq 0.
\]
Let $\cA^{\boldsymbol{\xi}}_{\tau,N}$ denote the set of all square-integrable $\Gamma$-valued $\bF^{N,\boldsymbol{\xi},\boldsymbol{W},\hat{S}(\tau)}$-progressively measurable processes.
Given a vector of stopping times $\boldsymbol{\tau} = (\tau_{1},\ldots,\tau_{N}) \in \cT^{\boldsymbol{\xi},N}_{T,N}\doteq \times^{N} \cT^{\boldsymbol{\xi}}_{T,N}$, let $\cA^{\boldsymbol{\xi},N}_{\boldsymbol{\tau},N} \doteq \times_{i=1}^{N} \cA^{\boldsymbol{\xi}}_{\tau_{i},N}$ be the set of all strategy vectors $\boldsymbol{\alpha} = (\alpha_{1},\ldots,\alpha_{N})$ such that $\alpha_{i}\in \cA^{\boldsymbol{\xi}}_{\tau_{i},N}$ for every $i\in \{1,\ldots,N\}$.

For $\boldsymbol{\tau}\in \cT^{\boldsymbol{\xi},N}_{T,N}$, $\boldsymbol{\alpha}\in \cA^{\boldsymbol{\xi},N}_{\boldsymbol{\tau},N}$, define the expected costs of player $i\in \{1,\ldots,N\}$ when players' positions start from $\boldsymbol{\xi}^{N}$ at time zero according to
\[
	J^{N}_{i}(\boldsymbol{\tau},\boldsymbol{\alpha}) \doteq \hat{\Mean}\left[ \int_{0}^{T} \! f\left(t,X^{\alpha_{i}}_{i}(t),\hat{S},\alpha_{i}(t),\mu^{N}_{i}(t)\right)dt + h\bigl(\tau_{i}, \mu_{i}^{N}(\tau_{i})\bigr) + g\left(X^{\alpha_{i}}_{i}(T),\hat{S},\mu_{i}^{N}(T)\right) \right],
\]
where $\hat{\Mean}$ denotes expectation with respect to $\hat{\Prb}$, the position of player $i$ is given by
\begin{equation} \label{EqDynamicsNPl}
	X^{\alpha_{i}}_{i}(t) = \xi_{i} + \int_{0}^{t} \alpha_{i}(r)dr + \sigma W_{i}(t),\quad t\geq 0,
\end{equation}
and $\mu^{N}_{i}(t)$ is the empirical measure of positions and information states of the other players at time $t$:
\begin{equation} \label{ExNPlayerEmp}
		\mu^{N}_{i}(t) \doteq \frac{1}{N-1} \sum_{j\neq i} \delta_{(X^{\alpha_{j}}_{j}(t),\, \mathbf{1}_{[\tau_{j},\infty)}(t))},\quad t\in [0,T].
\end{equation}
Here, the position vector $\boldsymbol{X}^{N,\boldsymbol{\alpha}} = (X^{\alpha_{1}}_{1},\ldots,X^{\alpha_{N}}_{N})$ of all $N$ players evolves according to 
\begin{equation} \label{EqDynamicsNPlvec}
	\boldsymbol{X}^{N,\boldsymbol{\alpha}}(t) \doteq \boldsymbol{\xi}^{N} + \int_{0}^{t} \boldsymbol{\alpha}(r)dr + \sigma\cdot \boldsymbol{W}^{N}(t), \quad t\geq 0.
\end{equation}
As the random vector of initial positions will be fixed, we will omit the dependence on $\boldsymbol{\xi}^{N}$ from the notation of players' positions.

In this set-up, approximate Nash equilibria for the $N$-player game with the option to buy information are defined as follows.

\begin{definition} \label{DefNplayerNash}
	Let $\epsilon \geq 0$. A pair $(\boldsymbol{\tau}, \boldsymbol{\alpha})$ with $\boldsymbol{\tau} \in \mathcal{T}^{\boldsymbol{\xi},N}_{T,N}$ and $\boldsymbol{\alpha} \in \mathcal{A}^{\boldsymbol{\xi},N}_{\boldsymbol{\tau},N}$ is called an \emph{$\epsilon$-Nash equilibrium} for the $N$-player game starting in $\boldsymbol{\xi}^{N}$ at time zero
	if for all $i\in \{1,\ldots,N\}$,
	\[
		J^{N}_{i}(\boldsymbol{\tau},\boldsymbol{\alpha}) \leq \epsilon + \inf_{\tilde{\tau}\in \mathcal{T}^{\boldsymbol{\xi}}_{T,N}} \inf_{\tilde{\alpha}\in \mathcal{A}^{\boldsymbol{\xi}}_{\tilde{\tau},N}} J^{N}_{i}\bigl( (\boldsymbol{\tau}^{-i}, \tilde{\tau}), (\boldsymbol{\alpha}^{-i},\tilde{\alpha})\bigr)
	\]
	with the usual notation for substitution of vector components. If the above inequality holds with $\epsilon = 0$, then $(\boldsymbol{\tau}, \boldsymbol{\alpha})$ is simply called a \emph{Nash equilibrium} for the $N$-player game.
\end{definition}

Approximate $N$-player Nash equilibria can be constructed starting from a strong solution of the mean field game.

\begin{thrm} \label{ThApproxNash}
	Grant \hyprefall. Let $\rho$ be a conditional flow of measures. If $\rho$ is a strong solution in the sense of Definition~\ref{DefMFGstrong}, then there exist $(\boldsymbol{\tau}^{N}, \boldsymbol{\alpha}^{N})$ and $\epsilon_{N}\in [0,\infty)$, $N\in \bN\setminus\{1\}$, such that
	\begin{itemize}
		\item for every $N$, $J^{N}_{1}(\boldsymbol{\tau}^{N},\boldsymbol{\alpha}^{N}) = J^{N}_{i}(\boldsymbol{\tau}^{N},\boldsymbol{\alpha}^{N})$ for all $i\in\{1,\ldots,N\}$, and $(\boldsymbol{\tau}^{N}, \boldsymbol{\alpha}^{N})$ is an $\epsilon_{N}$-Nash equilibrium for the $N$-player game;
		
		\item $\lim_{N\to\infty} \epsilon_{N} = 0$ and $\lim_{N\to\infty} J^{N}_{1}(\boldsymbol{\tau}^{N},\boldsymbol{\alpha}^{N}) = \inf_{\tilde{\tau}\in \mathcal{T}^{\xi}_{T}} \inf_{\tilde{\alpha}\in \mathcal{A}^{\xi}_{\tilde{\tau}}} J_{\rho}(\tilde{\tau},\tilde{\alpha})$.
	\end{itemize}
\end{thrm}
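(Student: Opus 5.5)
Starting from the strong solution $\rho$, I take an optimal pair $(\tau,\alpha)$ with $\tau\in\cT^{\xi}_T$ and $\alpha\in\cA^{\xi}_\tau$, as in Definition~\ref{DefMFGstrong}. By Doob's functional representation there are measurable maps $\Theta$ and $\Lambda$ with
\[
\tau=\Theta\bigl(\xi,W\bigr),\qquad \alpha(t)=\Lambda_t\bigl(\xi,W|_{[0,t]},S\cdot\mathbf{1}_{[\tau,\infty)}|_{[0,t]}\bigr).
\]
I then transplant this pair to every player: $\tau^N_i\doteq\Theta(\xi_i,W_i)$ and $\alpha^N_i(t)\doteq\Lambda_t(\xi_i,W_i|_{[0,t]},\hat S\cdot\mathbf{1}_{[\tau^N_i,\infty)}|_{[0,t]})$. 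These are admissible in the $N$-player game, since player $i$'s filtration contains everything they depend on. Each triple $(\xi_i,W_i,\hat S)$ has the same law as $(\xi,W,S)$ and the construction is exchangeable, so the costs $J^N_i$ coincide for all $i$. Given $\hat S=s$, the pairs $(X_i,\mathbf{1}_{[\tau_i,\infty)})$, $i\le N$, are i.i.d.\ with time-$t$ marginal $\rho_s(t)$ by the consistency condition. A conditional law of large numbers for empirical measures (with $\bS$ finite, conditioning on $\hat S$ costs nothing) then gives
\[
\sup_{t}\hat\Mean\bigl[\mathrm d_{\cP}(\mu^N_i(t),\rho_{\hat S}(t))\,|\,\hat S\bigr]\to0 .
\]
I would use the bounded Lipschitz metric and prove this pointwise in $t$ first, passing to $\int_0^T$ by dominated convergence; the metric is bounded, so uniformity is not needed.

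With this in hand I use \hypref{ALocLipMeasure}. The cost difference between the $N$-player functional and $J_\rho$ is bounded by
\[
L\,\hat\Mean\Bigl[\int_0^T(1+|X_i(t)|)\,\mathrm d_{\cP}(\mu^N_i(t),\rho_{\hat S}(t))\,dt+(1+|X_i(T)|)\,\mathrm d_{\cP}(\cdot)\Bigr].
\]
Since $\mathrm d_{\cP}$ is bounded and $X_i$ is square integrable, uniform integrability gives convergence to zero. The information cost term needs a separate remark. Reading $h(\tau_i,\cdot)$ as evaluated at the player's own position $X_i(\tau_i)$, as in the limit model, it involves no measure dependence at all and its contribution is identical in both models. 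Hence $J^N_1(\boldsymbol\tau^N,\boldsymbol\alpha^N)\to J_\rho(\tau,\alpha)$, which is the infimum by optimality.

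For the $\epsilon_N$-Nash property, fix $i$ and any deviation $(\tilde\tau,\tilde\alpha)\in\cT^{\boldsymbol\xi}_{T,N}\times\cA^{\boldsymbol\xi}_{\tilde\tau,N}$. The other players' strategies do not react to player $i$, so $\mu^N_i$ is unchanged by the deviation. Without loss of generality the deviation has cost at most that of the equilibrium strategy. By the coercivity \hypref{ACoercivity} and the growth bound \hypref{ARunTermCosts}, this gives a uniform bound on $\hat\Mean\int_0^T|\tilde\alpha|^2$ and hence on $\sup_t\hat\Mean|X_i(t)|^2$, independent of $N$ and of the deviation. The same Lipschitz estimate then shows
\[
\bigl|J^N_i(\text{deviation})-\bar J_\rho(\tilde\tau,\tilde\alpha)\bigr|\le\delta_N\to0 ,
\]
where $\bar J_\rho$ is the limit cost functional evaluated on the enlarged probability space. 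Here I need Cauchy--Schwarz together with $\hat\Mean[\mathrm d_{\cP}^2]\to0$ and the uniform second-moment bound; notably, $\mu^N_i$ is independent of player $i$'s deviation only given $\hat S$ and $\mathcal F^{\xi_i,W_i}$, which is not needed for Cauchy--Schwarz.

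The main obstacle is then to show
\[
\bar J_\rho(\tilde\tau,\tilde\alpha)\ge\inf_{\tilde\tau\in\cT^{\xi}_T}\inf_{\tilde\alpha\in\cA^{\xi}_{\tilde\tau}}J_\rho(\tilde\tau,\tilde\alpha)=\int V_\rho(0,x)\,\nu(dx).
\]
The deviating player uses extra information, namely the other players' noises $(\xi_j,W_j)_{j\neq i}$, which are independent of $(\xi_i,W_i,\hat S)$. I would handle this by the first step of the proof of Theorem~\ref{ThValueEquiv}. Conditioning on $\xi_i$ and on the extra independent randomness, the deviation becomes a randomized (weak-formulation) strategy in the limit problem, whose stopping time is adapted to a filtration independent of $\hat S$ and in which $\hat S$ enters only after $\tilde\tau$. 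The reduction to $\hat V_\rho$ in that step, combined with the equivalence of weak and strong formulations from Theorem~2.4 in \citet{lacker17}, shows that such enlarged-filtration controls cannot beat $V_\rho$. Hence the deviation gains at most $2\delta_N+|J^N_1-J_\rho(\tau,\alpha)|\doteq\epsilon_N\to0$.
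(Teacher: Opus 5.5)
Your proposal is correct and follows the paper's skeleton. That skeleton consists of transplanting the optimal pair to each player via Doob's functional representation, a law of large numbers conditional on $\hat S$ (the paper uses Varadarajan's theorem), and the estimate from \hypref{ALocLipMeasure} with Cauchy--Schwarz and a second-moment bound from coercivity. It also uses the independence of the other players' noises $(\xi_j,W_j)_{j\neq i}$ from $(\hat S,\xi_i,W_i)$. Two smaller points. Reading $h(\tau_i,\mu^N_i(\tau_i))$ as $h(\tau_i,X_i(\tau_i))$ is right. Your aside that $\mu^N_i$ is conditionally independent of the deviation given $\hat S$ and $\cF^{\xi_i,W_i}$ is false, since the deviation may use $(\xi_j,W_j)_{j\neq i}$, but as you say it is not needed.

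Where you genuinely differ is the final inequality $\bar J_\rho(\tilde\tau,\tilde\alpha)\geq\inf J_\rho$. The paper conditions on $(\xi_j,W_j)_{j\geq 2}$. For a.e.\ value, the deviating stopping time then coincides a.s.\ with a stopping time of the filtration of $(\xi_1,W_1)$. The deviating control coincides a.s.\ with a control progressive for the filtration of $(\xi_1,W_1,\hat S\cdot\mathbf{1}_{[\tilde\tau,\infty)})$. So the conditional law is that of an admissible pair in $\mathcal{T}^{\xi}_{T}\times\mathcal{A}^{\xi}_{\tau}$, and the optimality clause of Definition~\ref{DefMFGstrong} applies directly.

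You instead pass through value functions. You use the first step of the proof of Theorem~\ref{ThValueEquiv} and Lacker's weak/strong equivalence. Implicitly you also need $\hat V_\rho\geq V_\rho$ (the second step of Theorem~\ref{ThValueEquiv}, with its measurable selection) and the Remark after Definition~\ref{DefMFGstrong}. Given those results your chain is valid, but it is a detour, and your description conflates two things:
\begin{itemize}
\item If you condition on the extra independent randomness, the deviation is no longer randomized. It is then already a strong-formulation strategy, and its conditional cost given $\xi_i=x$ is at least $V_\rho(0,x)$ by the definition of $V_\rho$. Neither Theorem~\ref{ThValueEquiv} nor Lacker's result is needed; the Remark then integrates over $x$, or you skip conditioning on $\xi_i$ and use Definition~\ref{DefMFGstrong} as the paper does.
\item If you keep the extra randomness as a randomization, the first step only reduces you to a randomized pair for the control--stopping problem $\hat V_\rho$. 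Lacker's Theorem~2.4 (a pure control result) does not cover that, so you would have to condition on the randomization anyway.
\end{itemize}
The paper's route is therefore shorter and uses only the definition of a strong solution, while yours inherits heavier machinery without gaining generality.

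One small plus of your version is the bound $\delta_N$, uniform over all deviations whose cost does not exceed the equilibrium cost. The paper works instead along a sequence of $1/N$-optimal deviations; the two are equivalent.
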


\begin{proof}
Let $\rho$ be a strong solution in the sense of Definition~\ref{DefMFGstrong}. Choose $\tau^{\ast}\in \mathcal{T}^{\xi}_{T}$, $\alpha^{\ast}\in \mathcal{A}^{\xi}_{\tau^{\ast}}$ such that
\begin{align*}
	J_{\rho}(\tau^{\ast},\alpha^{\ast}) &\leq \inf_{\tilde{\tau}\in \mathcal{T}^{\xi}_{T}} \inf_{\tilde{\alpha}\in \mathcal{A}^{\xi}_{\tilde{\tau}}} J_{\rho}(\tilde{\tau},\tilde{\alpha}) \\
\intertext{and, for $\mu$-almost every $s\in \bS$,}
	\rho_{s}(t) &= \Prb_{s}\circ \left( X^{\alpha^{\ast}}(t), \mathbf{1}_{[\tau^{\ast},\infty)}(t) \right)^{-1} \text{ for all }t\in [0,T],
\end{align*}
where $X^{\alpha^{\ast}}$ is given by Eq.~\eqref{EqDynamicsXi} and $\Prb_{s}(.) \doteq \Prb(\,.\mid S=s)$ denotes conditional probability under $\Prb$ given $S = s$. By definition of $\mathcal{T}^{\xi}_{T}$ and $\mathcal{A}^{\xi}_{\tau^{\ast}}$, respectively, and thanks to Doob's functional representation, we can find Borel measurable progressive functions
\begin{align*}
	& \mathfrak{z}\colon [0,\infty)\times \bR^{d} \times \mathbf{C}([0,\infty),\bR^{d}) \rightarrow \{0,1\}, & & \mathfrak{a}\colon [0,\infty)\times \bR^{d} \times \mathbf{C}([0,\infty),\bR^{d})\times \bS \rightarrow \Gamma &
\end{align*}
such that for all $\omega\in \Omega$, all $t\geq 0$,
\begin{align*}
	& \mathbf{1}_{[\tau^{\ast}(\omega),\infty)}(t) = \mathfrak{z}\bigl(t,\xi(\omega),W(.,\omega)\bigr), &  &\alpha^{\ast}(t,\omega) = \mathfrak{a}\bigl(t,\xi(\omega),W(.,\omega), S(\omega)\cdot \mathbf{1}_{[\tau(\omega),\infty)}(t)\bigr), &
\end{align*} 
where we interpret the control process $\alpha^{\ast}$ as a random variable taking values in the space of $\Gamma$-valued deterministic relaxed controls over $[0,\infty)$. For every $i\in \bN$, define random elements on $(\hat{\Omega},\hat{\cF}, \hat{\Prb})$ according to
\begin{align*}
		&\tau_{i}(\omega) \doteq \inf\{t\in [0,T] :  \mathfrak{z}\bigl(t,\xi_{i}(\omega),W_{i}(.,\omega)\bigr) = 1\},& && && \\
		&\alpha_{i}(t,\omega) \doteq \mathfrak{a}\bigl(t,\xi_{i}(\omega),W_{i}(.,\omega), \hat{S}(\omega)\cdot \mathbf{1}_{[\tau_{i}(\omega),\infty)}(t)\bigr),& &t\geq 0,& &\omega\in \hat{\Omega}.&
\end{align*}
Notice that $\tau_{i}\in \cT^{\boldsymbol{\xi}}_{T,N}$ and $\alpha_{i}\in \cA^{\boldsymbol{\xi}}_{\tau_{i},N}$ whenever $i\leq N$. Let $X_{i}^{\alpha_{i}}$ be given by Eq.~\eqref{EqDynamicsNPl}. Then, by construction,
\begin{equation} \label{EqApproxNashID}
	\hat{\Prb}\circ \left(\tau_{i},\alpha_{i},X^{\alpha_{i}}_{i},\hat{S}\right)^{-1} = \Prb\circ \left(\tau^{\ast}, \alpha^{\ast}, X^{\ast}, S\right)^{-1} \text{ for all }i\in \bN,
\end{equation}
and the triplets $(\tau_{1},\alpha_{1},X^{\alpha_{1}}_{1}), (\tau_{2},\alpha_{2},X^{\alpha_{2}}_{2}),\ldots$ are conditionally independent given $\hat{S}$.

As in \eqref{ExNPlayerEmp}, for $i\in \{1,\ldots,N\}$, let $\mu^{N}_{i}(.)$ be the associated flow of position and information state empirical measures:
\[
	\mu^{N}_{i}(t,\omega) \doteq \frac{1}{N-1} \sum_{j\neq i} \delta_{(X^{\alpha_{j}}_{j}(t,\omega),\, \mathbf{1}_{[\tau_{j}(\omega),\infty)}(t))},\quad \omega\in \hat{\Omega}.
\]
In view of \eqref{EqApproxNashID} and by conditional independence, we deduce from Varadarajan's theorem \citep[cf.\ Theorem~11.4.1 in][p.\,399]{dudley02} that for $\mu$-almost every $s\in \bS$,
\begin{equation} \label{EqApproxNashEmpConv}
	\hat{\Prb}_{s}\left( \left\{ \omega\in \hat{\Omega} : \mu^{N}_{i}(t,\omega) \stackrel{N\to\infty}{\longrightarrow} \rho_{s}(t) \right\} \right) = 1 \text{ for all }t\in [0,T],
\end{equation}
where $\hat{\Prb}_{s}(.) \doteq \hat{\Prb}(\,.\mid S=s)$ denotes conditional probability under $\hat{\Prb}$ given $\hat{S} = s$.
Let $\check{\mu}^{N}_{i}$ be the empirical measure of position trajectories:
\[
	\check{\mu}^{N}_{i}(\omega) \doteq \frac{1}{N-1} \sum_{j\neq i} \delta_{(X^{\alpha_{j}}_{j}(t,\omega))_{t\in [0,T]}},\quad \omega\in \hat{\Omega}.
\]
Thus, $\check{\mu}^{N}_{i}$ is a random element with values in $\cP(\mathbf{C}([0,T],\bR^d))$. Again by Varadarajan's theorem, we find that for $\mu$-almost every $s\in \bS$,
\begin{equation} \label{EqApproxNashEmpPathConv}
	\hat{\Prb}_{s}\left( \left\{ \omega\in \hat{\Omega} : \check{\mu}^{N}_{i}(\omega) \stackrel{N\to\infty}{\longrightarrow} \check{\rho}_{s} \right\} \right) = 1,
\end{equation}
where $\check{\rho}_{s} \doteq \Prb_{s}\circ (X^{\alpha^{\ast}})^{-1}$ denotes the distribution of $X^{\alpha^{\ast}}$ under $\Prb_{s}$, that is, $\check{\rho}_{s}\in \cP(\mathbf{C}([0,T],\bR^d))$ is given by
\[
	\check{\rho}_{s}(B) \doteq \Prb\left( X^{\alpha^{\ast}} \in B \mid S = s \right),\quad B\in \cB(\mathbf{C}([0,T],\bR^d)).
\]

For $N\in \bN$, set
\begin{align*}
	&\boldsymbol{\tau}^{N} \doteq (\tau_{1},\ldots,\tau_{N}),& &\boldsymbol{\alpha}^{N} \doteq (\alpha_{1},\ldots,\alpha_{N}),& &\boldsymbol{X}^{N,\boldsymbol{\alpha}^{N}} \doteq (X^{\alpha_{1}}_{1},\ldots,X^{\alpha_{N}}_{N}).&
\end{align*}
Notice that $\boldsymbol{\tau}^{N}\in \cT^{\boldsymbol{\xi},N}_{T,N}$, $\boldsymbol{\alpha}^{N}\in \cA^{\boldsymbol{\xi},N}_{\boldsymbol{\tau}^{N},N}$, and that $\boldsymbol{X}^{N,\boldsymbol{\alpha}^{N}}$ satisfies Eq.~\eqref{EqDynamicsNPlvec}. Set
\[
	\epsilon_{N}\doteq \max \Bigl\{ J^{N}_{i}(\boldsymbol{\tau}^{N},\boldsymbol{\alpha}^{N}) - \inf_{\tilde{\tau}\in \mathcal{T}^{\boldsymbol{\xi}}_{T,N}} \inf_{\tilde{\alpha}\in \mathcal{A}^{\boldsymbol{\xi}}_{\tilde{\tau},N}} J^{N}_{i}\bigl( (\boldsymbol{\tau}^{N,-i}, \tilde{\tau}), (\boldsymbol{\alpha}^{N,-i},\tilde{\alpha})\bigr) : i\in \{1,\ldots,N\} \Bigr\}.
\]
By symmetry of construction, we have $J^{N}_{i}(\boldsymbol{\tau}^{N},\boldsymbol{\alpha}^{N}) = J^{N}_{1}(\boldsymbol{\tau}^{N},\boldsymbol{\alpha}^{N})$ for all $i\in \{1,\ldots,N\}$ and
\[
	\epsilon_{N} = J^{N}_{1}(\boldsymbol{\tau}^{N},\boldsymbol{\alpha}^{N}) - \inf_{\tilde{\tau}\in \mathcal{T}^{\boldsymbol{\xi}}_{T,N}} \inf_{\tilde{\alpha}\in \mathcal{A}^{\boldsymbol{\xi}}_{\tilde{\tau},N}} J^{N}_{1}\bigl( (\boldsymbol{\tau}^{N,-1}, \tilde{\tau}), (\boldsymbol{\alpha}^{N,-1},\tilde{\alpha})\bigr).
\]
We can therefore concentrate on the costs and deviations from equilibrium of player one. Choose $\tilde{\tau}_{1}^{N}\in \mathcal{T}^{\boldsymbol{\xi}}_{T,N}$ and $\tilde{\alpha}_{1}^{N}\in \mathcal{A}^{\boldsymbol{\xi}}_{\tilde{\tau}_{1}^{N},N}$ such that
\[
	J^{N}_{1}\bigl( (\boldsymbol{\tau}^{N,-1}, \tilde{\tau}_{1}^{N}), (\boldsymbol{\alpha}^{N,-1},\tilde{\alpha}_{1}^{N})\bigr) \leq \frac{1}{N} + \inf_{\tilde{\tau}\in \mathcal{T}^{\boldsymbol{\xi}}_{T,N}} \inf_{\tilde{\alpha}\in \mathcal{A}^{\boldsymbol{\xi}}_{\tilde{\tau},N}} J^{N}_{1}\bigl( (\boldsymbol{\tau}^{N,-1}, \tilde{\tau}), (\boldsymbol{\alpha}^{N,-1},\tilde{\alpha})\bigr).
\]
To establish the theorem, it is then enough to show the following:
\begin{itemize}
	\item Convergence of near equilibrium costs: $\lim_{N\to\infty} J^{N}_{1}(\boldsymbol{\tau}^{N},\boldsymbol{\alpha}^{N}) = J_{\rho}(\tau^{\ast},\alpha^{\ast})$.
	
	\item Asymptotic optimality: $\liminf_{N\to\infty} J^{N}_{1}\bigl( (\boldsymbol{\tau}^{N,-1}, \tilde{\tau}_{1}^{N}), (\boldsymbol{\alpha}^{N,-1},\tilde{\alpha}_{1}^{N})\bigr) \geq J_{\rho}(\tau^{\ast},\alpha^{\ast})$.
	
\end{itemize}

\paragraph{Convergence of near equilibrium costs.} For $s\in \bS$, let $\Mean_{s}$, $\hat{\Mean}_{s}$ denote expectation with respect to $\Prb_{s}$ and $\hat{\Prb}_{s}$, respectively. Then, for any $N$, by Fubini's theorem and the choice of $(\boldsymbol{\tau}^{N},\boldsymbol{\alpha}^{N})$, we have
\begin{multline*}
	J^{N}_{1}(\boldsymbol{\tau}^{N},\boldsymbol{\alpha}^{N}) = \int_{0}^{T}  \int_{\bS} \hat{\Mean}_{s}\left[f\left(t,X^{\alpha_{1}}_{1}(t),s,\alpha_{1}(t),\mu^{N}_{1}(t)\right)\right] \mu(ds)\, dt \\
	+ \int_{\bS} \hat{\Mean}_{s}\left[ h\bigl(\tau_{1}, X^{\alpha_{1}}_{1}(\tau_{1})\bigr) \right] \mu(ds) + \int_{\bS} \hat{\Mean}_{s}\left[ g\left(X^{\alpha_{1}}_{1}(T),s,\mu_{1}^{N}(T)\right) \right] \mu(ds).
\end{multline*}
Notice that strategy, stopping time, and position of the first player, when she is compliant, do not depend on $N$. Thanks to \eqref{EqApproxNashEmpConv} and dominated convergence (using assumption \hypref{ARunTermCosts} on $f$) as well as \eqref{EqApproxNashID}, we obtain
\begin{align*}
	\lim_{N\to\infty} &\int_{0}^{T}  \int_{\bS} \hat{\Mean}_{s}\left[f\left(t,X^{\alpha_{1}}_{1}(t),s,\alpha_{1}(t),\mu^{N}_{1}(t)\right)\right] \mu(ds)\, dt \\
	&=\int_{0}^{T}  \int_{\bS} \hat{\Mean}_{s}\left[f\left(t,X^{\alpha_{1}}_{1}(t),s,\alpha_{1}(t),\rho_{s}(t)\right)\right] \mu(ds)\, dt \\
	&= \int_{0}^{T}  \int_{\bS} \Mean_{s}\left[f\left(t,X^{\alpha^{\ast}}(t),s,\alpha^{\ast}(t),\rho_{s}(t)\right)\right] \mu(ds)\, dt,
\end{align*}
and, in the same way, this time using \hypref{ARunTermCosts} on $g$,
\[
	\lim_{N\to\infty} \int_{\bS} \hat{\Mean}_{s}\left[ g\left(X^{\alpha_{1}}_{1}(T),s,\mu_{1}^{N}(T)\right) \right] \mu(ds) = \int_{\bS} \Mean_{s}\left[ g\left(X^{\alpha^{\ast}}(T),s,\rho_{s}(T)\right) \right] \mu(ds).
\]
Convergence of the terms involving $h$ trivially follows from \eqref{EqApproxNashID} since, by assumption, $h$ does not depend on the measure argument.

Putting everything together, we find that
\begin{align*}
	&\begin{aligned}
	\lim_{N\to\infty} J^{N}_{1}(\boldsymbol{\tau}^{N},\boldsymbol{\alpha}^{N}) &=
		\int_{0}^{T}  \int_{\bS} \Mean_{s}\left[f\left(t,X^{\alpha^{\ast}}(t),s,\alpha^{\ast}(t),\rho_{s}(t)\right)\right] \mu(ds)\, dt \\
	& + \int_{\bS} \Mean_{s}\left[ h\bigl(\tau^{\ast},X^{\alpha^{\ast}}(\tau^{\ast})\bigr) + g\left(X^{\alpha^{\ast}}(T),s,\rho_{s}(T)\right) \right] \mu(ds)
	\end{aligned}\\
	&= \Mean\left[ \int_{0}^{T} f\left(t,X^{\alpha^{\ast}}(t),S,\alpha^{\ast}(t),\rho_{S}(t)\right)dt + h\bigl(\tau^{\ast},X^{\alpha^{\ast}}(\tau^{\ast})\bigr) + g\left(X^{\alpha^{\ast}}(T),S,\rho_{S}(T)\right) \right] \\
	&= J_{\rho}(\tau^{\ast},\alpha^{\ast}).
\end{align*}

\paragraph{Asymptotic optimality.} From Eq.~\eqref{EqDynamicsNPl} we find by standard estimates that for all $N\in \bN$,
\begin{equation} \label{EqApproxNashPositionEst}
	\sup_{t\in[0,T]} \hat{\Mean}\left[ |X(t)|^{2} \right] \leq 3\left( \hat{\Mean}\left[ |\xi|^{2} \right] + \sigma^{2}T + T\hat{\Mean}\left[ \int_{0}^{T}|\tilde{\alpha}^{N}_{1}(r)|^{2}dr \right]\right).
\end{equation}
The same estimate holds true with $\hat{\Mean}$ replaced by $\hat{\Mean}_{s}$, $s\in \bS$ with $\mu(\{s\}) > 0$. The coercivity assumption \hypref{ACoercivity} together with the non-negativity of $h$ thus imply that the costs of the deviating player along any subsequence with average control energy tending to infinity will tend to infinity, too. We can therefore assume without loss of generality that
\begin{equation} \label{EqApproxNashControlEnergy}
		\max_{s\in \bS : \mu(\{s\}) > 0} \sup_{N\in \bN} \hat{\Mean}_{s} \left[ \int_{0}^{T}|\tilde{\alpha}^{N}_{1}(r)|^{2}dr \right] < \infty.
\end{equation}

By Fubini's theorem, we have for all $N$,
\begin{multline*}
	J^{N}_{1}\bigl( (\boldsymbol{\tau}^{N,-1}, \tilde{\tau}^{N}_{1}), (\boldsymbol{\alpha}^{N,-1},\tilde{\alpha}^{N}_{1})\bigr) = \int_{0}^{T}  \int_{\bS} \hat{\Mean}_{s}\left[f\left(t,X^{\tilde{\alpha}^{N}_{1}}_{1}(t),s,\tilde{\alpha}^{N}_{1}(t),\mu^{N}_{1}(t)\right)\right] \mu(ds)\, dt \\
	+ \int_{\bS} \hat{\Mean}_{s}\left[ h\bigl(\tilde{\tau}^{N}_{1}, X^{\tilde{\alpha}^{N}_{1}}_{1}(\tilde{\tau}^{N}_{1})\bigr) \right] \mu(ds) + \int_{\bS} \hat{\Mean}_{s}\left[ g\left(X^{\tilde{\alpha}^{N}_{1}}_{1}(T),s,\mu_{1}^{N}(T)\right) \right] \mu(ds).
\end{multline*}
Notice that the flow of empirical measures $\mu_{1}^{N}$ is the same as before, as it only depends on the non-deviating players. Let $\hat{J}^{N}_{1}$ denote the corresponding cost of player one when the flow of measures $\mu^{N}_{1}$ is replaced by its mean field limit:
\[
	 \hat{J}^{N}_{1} \doteq \hat{\Mean}\left[ \int_{0}^{T} \! f\left(t,X^{\tilde{\alpha}^{N}_{1}}_{1}(t),\hat{S},\tilde{\alpha}^{N}_{1}(t),\rho_{\hat{S}}(t)\right)dt + h\bigl(\tilde{\tau}^{N}_{1}, X^{\tilde{\alpha}^{N}_{1}}_{1}(\tilde{\tau}^{N}_{1})\bigr) + g\left(X^{\tilde{\alpha}^{N}_{1}}_{1}(T),\hat{S},\rho_{\hat{S}}(T)\right) \right].
\]
Using again Fubini's theorem, the fact that $h$ does not depend on the measure argument, and assumption \hypref{ALocLipMeasure}, we have
\begin{multline*}
	\left| J^{N}_{1}\bigl( (\boldsymbol{\tau}^{N,-1}, \tilde{\tau}^{N}_{1}) - \hat{J}^{N}_{1} \right| 
	\leq \int_{\bS} \hat{\Mean}_{s}\left[ L\left(1+ |X^{\tilde{\alpha}^{N}_{1}}_{1}(T)| \right) \mathrm{d}_{\cP}\left(\mu_{1}^{N}(T),\rho_{s}(T)\right) \right] \mu(ds) \\
	+ \int_{0}^{T}  \int_{\bS} \hat{\Mean}_{s}\left[ L\left(1+ |X^{\tilde{\alpha}^{N}_{1}}_{1}(t)| \right) \mathrm{d}_{\cP}\left(\mu_{1}^{N}(t),\rho_{s}(t)\right) \right] \mu(ds)\, dt.
\end{multline*}
Thanks to the Cauchy-Schwarz inequality together with \eqref{EqApproxNashPositionEst} and \eqref{EqApproxNashControlEnergy}, we deduce from \eqref{EqApproxNashEmpConv} and dominated convergence that
\[
	\left| J^{N}_{1}\bigl( (\boldsymbol{\tau}^{N,-1}, \tilde{\tau}^{N}_{1}) - \hat{J}^{N}_{1} \right| \stackrel{N\to\infty}{\longrightarrow} 0.
\]

Now, we claim that $\hat{J}^{N}_{1} \geq J_{\rho}(\tau^{\ast},\alpha^{\ast})$ for all $N$. To see this, fix $N$. Let $\kappa$ denote (a version of) the regular conditional distribution (under $\hat{\Prb}$) of $(\tilde{\tau}^{N}_{1}, \hat{S}, \xi_{1}, \tilde{\alpha}^{N}_{1},X^{\tilde{\alpha}^{N}_{1}}_{1},W_{1})$ given $\xi_{2},\ldots,\xi_{N}$, $W_{2},\ldots,W_{N}$. 
Then for $\hat{\Prb}\circ (\xi_{2},\ldots,\xi_{N}, W_{2},\ldots,W_{N})^{-1}$-almost every $(\boldsymbol{\xi},\boldsymbol{W}) \in \bR^{d\times (N-1)} \times \mathbf{C}([0,T],\bR^{d})^{N-1}$, we have, with a slight abuse of notation:
\begin{itemize}
	\item $W_{1}$ is an $\hat{\bF}$-Wiener process also under $\kappa_{(\boldsymbol{\xi},\boldsymbol{W})}$;
	
	\item $\hat{S}, \xi_{1}, W_{1}$ are independent under $\kappa_{(\boldsymbol{\xi},\boldsymbol{W})}$;
	
	\item with $\kappa_{(\boldsymbol{\xi},\boldsymbol{W})}$-probability one, $X^{\tilde{\alpha}^{N}_{1}}_{1}$ satisfies Eq.~\eqref{EqDynamicsNPl} with $i=1$ and control process $\tilde{\alpha}^{N}_{1}$;
	
	\item $\tilde{\tau}^{N}_{1}$ coincides up to a set of $\kappa_{(\boldsymbol{\xi},\boldsymbol{W})}$-measure zero with a stopping time with respect to the filtration generated by $\xi_{1}$ and $W_{1}$;
	
	\item $\tilde{\alpha}^{N}_{1}$ coincides up to a set of $\kappa_{(\boldsymbol{\xi},\boldsymbol{W})}$-measure zero with a control process that is progressively measurable with respect to the filtration generated by $\xi_{1}$, $W_{1}$, and $\hat{S}\cdot \mathbf{1}_{[\tilde{\tau}^{N}_{1},\infty)}$.
\end{itemize}
The last two points above follow from Doob's functional representation and the fact that $\tilde{\tau}_{1}^{N}\in \mathcal{T}^{\boldsymbol{\xi}}_{T,N}$ and $\tilde{\alpha}_{1}^{N}\in \mathcal{A}^{\boldsymbol{\xi}}_{\tilde{\tau}_{1}^{N},N}$.

It follows that there exist $\tau\in \mathcal{T}^{\xi}_{T}$, $\alpha\in \mathcal{A}^{\xi}_{\tau}$ on the limit probability space such that
\[
	\Prb\circ \left( \tau, S, \xi, \alpha, X^{\alpha}, W \right) = \kappa_{(\boldsymbol{\xi},\boldsymbol{W})}.
\]
But this entails that $\hat{J}^{N}_{1} \geq J_{\rho}(\tau^{\ast},\alpha^{\ast})$ by the optimality of $(\tau^{\ast},\alpha^{\ast})$.

\end{proof}


\bibliographystyle{abbrvnat}

\end{document}